\DeclareMathOperator{\re}{\mathrm{Re}}
\DeclareMathOperator{\imag}{\mathrm{Im}}
\DeclareMathOperator{\se}{\mathscr{S}^*_e}
\DeclareMathOperator{\ke}{\mathscr{K}_e}
\DeclareMathOperator{\s}{\mathfrak{s}_{\mu,\nu}}
\DeclareMathOperator{\h}{\mathfrak{h}_{\mu,\nu}}
\DeclareMathOperator{\f}{\mathfrak{f}_{\mu,\nu}}
\numberwithin{equation}{section}
\newtheorem{theorem}{Theorem}[section]
\newtheorem{lemma}[theorem]{Lemma}
\newtheorem{corollary}[theorem]{Corollary}
\theoremstyle{remark}
\newtheorem{example}[theorem]{Example}
\begin{document}

\title[Exponential Starlikeness and Convexity]{Exponential Starlikeness and Convexity of Confluent Hypergeometric, Lommel and Struve Functions}
\author[Adiba Naz]{Adiba Naz}
\address{Department of Mathematics, University of Delhi, 	Delhi--110 007, India}
\email{adibanaz81@gmail.com}

\author[Sumit Nagpal]{Sumit Nagpal}
\address{Department of Mathematics, Ramanujan College, University of Delhi,	Delhi--110 019, India}
\email{sumitnagpal.du@gmail.com}

\author[V. Ravichandran]{V. Ravichandran}
\address{Department of Mathematics, National Institute of Technology,  Tiruchirappalli--620 015,  India}
\email{vravi68@gmail.com}

\begin{abstract}
	Sufficient conditions are obtained on the parameters of Lommel function of the first kind, generalized Struve function of the first kind and the confluent hypergeometric function under which these special functions become exponential convex and exponential starlike in the open unit disk.	The method of differential subordination is employed in proving the results. Few examples are also provided to illustrate the  results obtained.
\end{abstract}

\keywords{differential subordination, exponential function, Struve function, Lommel function, confluent hypergeometric function}
\subjclass[2010]{30C10, 30C45, 30C80}

\maketitle

\section{Introduction}
The remarkable use of generalized hypergeometric functions aroused great interest among researchers in the last few decades, specifically after  they were used by Louis de Branges \cite{MR772434}  in the proof of the Milen conjecture which in turn imply the famous Bieberbach conjecture. There is an extensive literature in geometric function theory that deals with the analytic and geometric properties of different kinds of  hypergeometric functions like Bessel function \cite{MR3568675,MR3753028,MR2656410,MR2429033,MR123456,MR2486953,MR2826152,MR2743533},  Struve function \cite{MR3035216,MR3718596,MR3528732,MR3927311,MR3596935}, confluent hypergeometric function \cite{MR3738359,MR1017006,MR1637348,MR3351038}, Lommel function \cite{MR3353311,sim2018geometric,MR3596935} and other generalized hypergeometric functions \cite{MR826655,MR918587}. Using numerous techniques, the authors determined various sufficient conditions on the parameters involved in these special functions to belong to the class of univalent functions or in its subclasses of functions that are convex, starlike, close-to-convex, uniformly convex and so forth. In this paper, several sufficient conditions are obtained on the parameters involved in Lommel function of the first kind, generalized Struve function of the first kind and the confluent hypergeometric function to belong to the class of normalized univalent functions $f$ defined in the unit disk $\mathbb{D}:=\{z\in\mathbb{C}\colon|z|<1\}$ such that $w=zf'/f$ or $w=1+zf''/f'$ lies in the domain $|\log w|<1$ of the right-half plane associated with the exponential function.

The concept of subordination plays a crucial role in defining several well-known classes of analytic functions defined in the unit disk $\mathbb{D}$. If $f$ and $g$ are two analytic functions in $\mathbb{D}$, then $f$ is subordinate to $g$, if there exists an analytic  function $w$ in $\mathbb{D}$ such that $w(0)=0$ and $|w(z)|<1$ for all  $z\in\mathbb{D}$ that satisfies  $f=g\circ w$ in $\mathbb{D}$. Symbolically, we write it as $f\prec g$. If  $f\prec g$, then $f(0)=g(0)$ and $f(\mathbb{D})\subseteq g(\mathbb{D})$. In particular, if $g$ is univalent in $\mathbb{D}$, then $f\prec g$ if and only if $f(0)=g(0)$ and $f(\mathbb{D})\subseteq g(\mathbb{D})$. The family $\mathscr{P}_e$ consists of analytic functions $p$  in $\mathbb{D}$ with $p(0)=1$  and $p(z)\prec e^z$ for all $z\in \mathbb{D}$.

Let $\mathscr{A}$ denote the class of all analytic functions $f$ normalized by the conditions $f(0)=f'(0)-1=0$ and its subclass consisting of all univalent functions is denoted by $\mathscr{S}$.  A function $f\in\mathscr{A}$ is said to be \emph{exponential convex} (or \emph{exponential starlike}) if the quantity $1+zf''(z)/f'(z)$ (or $zf'(z)/f(z)$) belongs to $\mathscr{P}_e$. We denote the classes of such functions by $\ke$ and $\se$ respectively. These classes are particular case of the subclasses of convex and starlike functions introduced by Ma and Minda \cite{MR1343506} and were extensively studied by Mendiratta \textit{et al.\@}  \cite{MR3394060}.

If $\Gamma$ is the Euler's gamma function and $(x)_\mu$ is the Pochhammer symbol given by
\begin{equation*} (x)_\mu:=\frac{\Gamma(x+\mu)}{\Gamma (x)}= \begin{cases}
1, & \mu=0\\
x(x+1)\cdots (x+n-1), &\mu=n\in\mathbb{N}
\end{cases}\end{equation*}
then the generalized hypergeometric function ${}_qF_s(\alpha_1, \ldots,\alpha_q;\beta_1,\ldots,\beta_s;z )$ is defined by
\[ {}_qF_s(\alpha_1, \ldots,\alpha_q;\beta_1,\ldots,\beta_s;z )=\sum_{n\geq0} \frac{(\alpha_1)_n\ldots(\alpha_q)_n}{(\beta_1)_n\ldots(\beta_s)_n}\frac{z^n}{n!} \]
where  $\alpha_i\in\mathbb{C}$  ($i=1,2,\ldots q$), $\beta_j\in\mathbb{C}\setminus\{0,-1,-2,\ldots\}$  $(j=1,2,\ldots s)$  and $q$, $s$ are  nonnegative integers such that $q\leq s+1$. This paper aims to determine the sufficient conditions for some generalized hypergeometric functions  to be in the classes $\ke$ and $\se$. Sections \ref{confluent}, \ref{lommel} and \ref{struve} are devoted to confluent hypergeometric function, Lommel function of the first kind and generalized Struve function of the first kind respectively. Few examples are also discussed in each section to illustrate the results obtained.

We shall make use of the theory of differential subordination (introduced by Miller and Mocanu \cite{MR1760285}) in proving our results. In \cite{MR3962536}, Naz \textit{et al.\@}  discussed the class of admissible functions associated with the exponential function $e^z$ and proved the following key lemma:
\begin{lemma}\cite{MR3962536}\label{lemA}
Let $\Omega$ be a subset of $\mathbb{C}$ and the function $\Psi \colon \mathbb{C}^3\times \mathbb{D}\to\mathbb{C}$ satisfies the admissibility condition $ \Psi(r,s,t;z) \notin \Omega$ whenever
 \begin{equation*}
 r = e^{e^{i\theta}},\quad  s = m e^{i\theta} r \quad
 \text{and} \quad \re (1+t/s) \geq m (1+\cos \theta)
 \end{equation*}
 where $z\in\mathbb{D}$, $\theta \in [0,2\pi)$ and $m \geq 1$. If $p$ is an analytic function in $\mathbb{D}$ with $p(0)=1$ and $\Psi(p(z), zp'(z), z^2 p''(z); z) \in \Omega$ for $z\in \mathbb{D}$, then   $p \in \mathscr{P}_e$.
 \end{lemma}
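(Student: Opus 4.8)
The plan is to recognize this as a direct instance of the general admissible-function machinery of Miller and Mocanu, specialized to the univalent map $q(z)=e^z$. First I would observe that $e^z$ is univalent on $\mathbb{D}$ (its imaginary part varies by less than $2\pi$ there, and indeed it extends analytically across $\partial\mathbb{D}$), and that by definition the assertion $p\in\mathscr{P}_e$ is precisely the subordination $p\prec e^z$, since $p(0)=1=e^0$. Thus it suffices to prove $p\prec q$ with $q=e^z$. The degenerate case $p\equiv 1$ satisfies $p\prec e^z$ trivially, so I may assume $p\not\equiv q(0)$.

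The argument then proceeds by contradiction. Assuming $p\not\prec q$, I would invoke the fundamental second-order lemma of Miller and Mocanu: there exist a point $z_0\in\mathbb{D}$, a boundary point $\zeta_0=e^{i\theta}\in\partial\mathbb{D}$, and a real number $m\geq1$ such that $p(z_0)=q(\zeta_0)$, that $z_0 p'(z_0)=m\zeta_0 q'(\zeta_0)$, and that $\re(1+z_0 p''(z_0)/p'(z_0))\geq m\,\re(1+\zeta_0 q''(\zeta_0)/q'(\zeta_0))$.

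The crux is a routine computation specializing these three relations to $q=e^z$, for which $q=q'=q''=e^z$. I would compute $q(\zeta_0)=e^{e^{i\theta}}$, so that $r:=p(z_0)=e^{e^{i\theta}}$; next $\zeta_0 q'(\zeta_0)=e^{i\theta}e^{e^{i\theta}}$, so that $s:=z_0 p'(z_0)=m e^{i\theta} r$; and finally $1+\zeta_0 q''(\zeta_0)/q'(\zeta_0)=1+e^{i\theta}$, whose real part is $1+\cos\theta$. Setting $t:=z_0^2 p''(z_0)$ and noting that $1+t/s=1+z_0 p''(z_0)/p'(z_0)$, the inequality reads $\re(1+t/s)\geq m(1+\cos\theta)$. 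These are exactly the three conditions appearing in the admissibility hypothesis, whence $\Psi(r,s,t;z_0)\notin\Omega$.

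This contradicts the standing assumption that $\Psi(p(z),zp'(z),z^2 p''(z);z)\in\Omega$ for every $z\in\mathbb{D}$, evaluated at $z=z_0$. Hence $p\prec e^z$, i.e. $p\in\mathscr{P}_e$. I expect no genuine obstacle beyond correctly matching the boundary data of $e^z$ to the admissibility condition; the only points needing care are the technical justification that the Miller--Mocanu lemma applies (univalence of $e^z$ on $\mathbb{D}$ together with its analytic continuation across the boundary, so that the exceptional boundary set is empty) and the trivial constant case handled above.
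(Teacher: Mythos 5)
Your proof is correct. Note that the paper does not prove this lemma at all --- it imports it verbatim from \cite{MR3962536} --- and your argument (verify $e^z$ is univalent on $\mathbb{D}$ with nonvanishing derivative and no exceptional boundary set, dispose of the constant case $p\equiv 1$, then apply the Miller--Mocanu second-order boundary lemma to $q(z)=e^z$ so that $r=e^{e^{i\theta}}$, $s=me^{i\theta}r$, $\re(1+t/s)\geq m(1+\cos\theta)$, contradicting admissibility) is exactly the standard derivation by which the cited source establishes it.
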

It is worth to note that the admissiblity condition $\Psi(r,s,t;z)\not\in\Omega$ is verified for all $r=e^{e^{i\theta}}$, $s=e^{e^{i\theta}}e^{e^{i\theta}}$ and $t$ with $\re(1+t/s)\geq 0$, that is,  $\re ((t+s)e^{-i\theta}e^{-e^{i\theta}}) \geq 0$ for all $\theta\in[0,2\pi)$ and $m\geq1$. This form of admissibility condition will be utilized throughout the paper. Moreover, for the case $\Psi \colon  \mathbb{C}^2\times \mathbb{D}\to\mathbb{C}$, the admissibility condition reduces to
\[\Psi(e^{e^{i\theta}}, m e^{i\theta} e^{e^{i\theta}};z) \notin \Omega \]
where $z\in\mathbb{D}$,  $\theta \in [0,2\pi)$ and $m \geq 1$.

\section{Confluent Hypergeometric Function}\label{confluent}
For complex constants $a$ and $c$ with $c\ne0$, $-1$, $-2$, $\ldots$, the \textit{confluent} (or \textit{Kummer}) \textit{hypergeometric function} $\Phi(a;c;z)$ defined by
\begin{align*}
\Phi(a;c;z) &={}_1 F_{1}(a;c;z) =1+\frac{a}{c}\frac{z}{1!}+\frac{a(a+1)}{c(c+1)}\frac{z^2}{2!}+\cdots\\
&=\sum_{n=0}^{\infty}\frac{(a)_n}{(c)_n}\frac{z^n}{n!}=\frac{ \Gamma(c)}{\Gamma(a)}\sum_{n=0}^{\infty}\frac{\Gamma(a+n)}{\Gamma(c+n)}\frac{z^n}{n!}
\end{align*}
is an entire function and it satisfies the second-order Kummer's differential equation
\begin{equation}\label{con1}
z\omega''(z)+(c-z)\omega'(z)-a\omega(z)=0 \qquad (z\in\mathbb{C}).
\end{equation}
Moreover, the function $\Phi(a;c;z)$ satisfies the following: \begin{equation}\label{elm}
a\Phi(a+1;c+1;z)=c\Phi'(a;c;z)\quad \mbox{and}\quad \Phi(a;a;z)=e^z.
\end{equation}
Miller and Mocanu \cite{MR1017006} investigated the univalence, convexity and starlikeness of confluent hypergeometric functions using the technique of differential subordination. Ponnusamy and Vuorinen \cite{MR1637348} carried out the similar analysis by using the sufficient conditions of close-to-convexity and starlikeness in terms of monotonicity of coefficients.  Ali \textit{et al.\@}  \cite{MR3351038} determined the Janowski convexity and Janowski starlikeness of $\Phi(a;c;z)$. Recently, Bohra and Ravichandran \cite{MR3738359} determined the conditions under which $\Phi(a;c;z)$ is strongly convex of order $1/2$. The first theorem of this section determines the conditions on parameters $a$ and $c$ so that $\Phi(a;c;z)$ belongs to the class $\mathscr{P}_e$.

\begin{theorem}\label{conf1}
Let the parameters $ a$, $c\in\mathbb{C}$  be constrained such that $c$ is not a nonnegative integer and $\re(c)\geq|a|+2$. Then $\Phi(a;c;z)\in\mathscr{P}_e$.
\end{theorem}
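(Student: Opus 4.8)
The plan is to realize $p(z) := \Phi(a;c;z)$ as the solution of a second-order differential equation and then invoke the admissibility Lemma \ref{lemA} with $\Omega = \{0\}$. Since $\Phi(a;c;0) = 1$, the function $p$ is a legitimate candidate for membership in $\mathscr{P}_e$, and the Kummer equation \eqref{con1} gives $zp''(z) + (c-z)p'(z) - ap(z) = 0$. Multiplying by $z$ and writing $r = p(z)$, $s = zp'(z)$, $t = z^2p''(z)$ turns this into $t + (c-z)s - azr = 0$, so I would set
\[
\Psi(r,s,t;z) := t + (c-z)s - azr, \qquad \Omega := \{0\},
\]
which makes $\Psi(p(z), zp'(z), z^2p''(z);z) \equiv 0 \in \Omega$ automatically.

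The substance of the argument is the admissibility check: I must show $\Psi(r,s,t;z) \neq 0$ whenever $r = e^{e^{i\theta}}$, $s = me^{i\theta}r$ and $\re(1 + t/s) \geq m(1+\cos\theta)$, with $m \geq 1$, $\theta \in [0,2\pi)$ and $z \in \mathbb{D}$. Since $s = me^{i\theta}e^{e^{i\theta}} \neq 0$, I would divide by $s$ and exploit the crucial cancellation $r/s = e^{-i\theta}/m$ to obtain
\[
\frac{\Psi}{s} = \frac{t}{s} + (c-z) - \frac{az e^{-i\theta}}{m}.
\]
Taking real parts and using $\re(t/s) = \re(1+t/s) - 1 \geq m(1+\cos\theta) - 1$ gives
\[
\re\left(\frac{\Psi}{s}\right) \geq m(1+\cos\theta) - 1 + \re(c) - \re(z) - \frac{1}{m}\re(az e^{-i\theta}).
\]
I would then bound the four contributions separately: $m(1+\cos\theta) - 1 \geq -1$ since $m \geq 1$ and $1+\cos\theta \geq 0$; $\re(c) \geq |a| + 2$ by hypothesis; $-\frac{1}{m}\re(aze^{-i\theta}) \geq -\frac{|a||z|}{m} \geq -|a|$ since $|z| < 1 \leq m$; and the strict bound $-\re(z) > -1$ coming from $\re(z) \leq |z| < 1$. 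Summing these, $\re(\Psi/s) > -1 + (|a|+2) - 1 - |a| = 0$, so $\Psi \neq 0$, the admissibility condition holds, and Lemma \ref{lemA} yields $p \in \mathscr{P}_e$.

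The delicate point, and where the hypothesis $\re(c) \geq |a| + 2$ is forced, is the exact cancellation in the final estimate: the constant $2$ is consumed by the two unit deficits from $m(1+\cos\theta) - 1 \geq -1$ and $-\re(z) > -1$, while $|a|$ absorbs the term $-\frac{1}{m}\re(aze^{-i\theta})$. The strict inequality $-\re(z) > -1$, valid throughout $\mathbb{D}$, keeps the final bound strict even in the borderline case $\re(c) = |a| + 2$, so no separate treatment of equality is needed. The only bookkeeping to watch is that the series normalization gives $p(0)=1$ and that $s \neq 0$ legitimizes dividing by $s$; everything else is a routine real-part estimate once the differential equation is recast in the $(r,s,t)$ variables.
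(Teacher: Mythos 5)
Your proof is correct, but it takes a genuinely different and more direct route than the paper. The paper never applies Lemma \ref{lemA} to $\Phi(a;c;z)$ itself: instead it first proves the auxiliary claim that $(c/a)\Phi'(a;c;z)\in\mathscr{P}_e$ when $a\neq 0$ and $\re(c)\geq|a+1|+1$ (using the differentiated Kummer equation, whose recast form $t+s+(c-z)s-(a+1)zr$ conveniently contains the combination $t+s$), then transfers this to $\Phi(a;c;z)$ via the contiguous relation $(a-1)\Phi(a;c;z)=(c-1)\Phi'(a-1;c-1;z)$, which forces the exclusion $a\neq 1$ and a separate limiting argument $a_n\to 1$ to finish that case. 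You instead feed the Kummer equation for $p(z)=\Phi(a;c;z)$ directly into the lemma, divide by $s=me^{i\theta}e^{e^{i\theta}}\neq 0$, and run a real-part estimate using $\re(1+t/s)\geq m(1+\cos\theta)\geq 0$ together with the exact cancellation $r/s=e^{-i\theta}/m$; the strict inequality $-\re(z)>-1$ then closes the borderline case $\re(c)=|a|+2$. This buys uniformity: no case split at $a=0$ or $a=1$, no contiguous relation, no limit procedure. What the paper's detour buys in exchange is the intermediate fact about $\Phi'$ (of some independent interest, and reused in spirit for Corollary \ref{cor2}) and adherence to its uniform template of modulus lower bounds after factoring out $e^{e^{i\theta}}$, where the $t+s$ grouping and the hypothesis $\re((t+s)e^{-i\theta}e^{-e^{i\theta}})\geq 0$ can be invoked verbatim; your computation shows this grouping is not actually essential, since one can equally write $t=(t+s)-s$ or, as you do, work with $t/s$ alone. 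Both arguments are sound and yield the stated hypothesis $\re(c)\geq|a|+2$.
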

\begin{proof}
First, we claim that the function $(c/a)\Phi'(a;c;z)\in\mathscr{P}_e$ if $a\neq 0$ and $\re(c)\geq|a+1|+1$. Define a function $p\colon\mathbb{D}\to\mathbb{C}$ by \begin{equation*}
	p(z)=\frac{c}{a}\Phi'(a;c;z).
	\end{equation*}Clearly $p$ is analytic in $\mathbb{D}$ as $a\neq 0$. Since $\Phi(a;c;z)$ satisfies \eqref{con1}, the function $p$ satisfies the second-order differential equation
	\begin{equation*}
	z^2p''(z)+(c-z+1)zp'(z)-(a+1)zp(z)=0.
	\end{equation*} Let us define another function $\Psi\colon\mathbb{C}^3\times\mathbb{D}\to\mathbb{C}$ by \[\Psi(r,s,t;z)=t+s+(c-z)s-(a+1)rz \] and suppose that $\Omega:=\{0\}$. Then $\Psi(p(z),zp'(z),z^2p''(z);z)\in\Omega$ for all $z\in\mathbb{D}$. In order to prove that $p(z)\prec e^z$, we must show $\Psi(r,s,t;z)\notin\Omega$ whenever $r = e^{e^{i\theta}}$, $s = m e^{i\theta} e^{e^{i\theta}}$, $\re ((s+t)e^{-i\theta}e^{-e^{i\theta}}) \geq 0 $, $\theta\in[0,2\pi)$, $z\in \mathbb{D}$ and $m \geq 1$, in view of Lemma \ref{lemA}. Recall the  inequality $|z_1\pm z_2|\geq ||z_1|-|z_2||$ for all $z_1$, $z_2\in\mathbb{C}$ and consider
	\begin{align*}
	|\Psi(r,s,t;z)|& = |e^{e^{i\theta}}|\cdot|(t+s)e^{-e^{i\theta}}+(c-z)me^{i\theta}-(a+1)z|\\& \geq e^{\cos\theta}\left[|(t+s)e^{-i\theta}e^{-e^{i\theta}}+(c-z)m|-|a+1|\cdot|z|\right]\\
	& \geq \frac{1}{e}\left[ \re((t+s)e^{-i\theta}e^{-e^{i\theta}})+\re(c-z)m -|a+1|\cdot|z|\right]\\
&\geq \frac{1}{e}\left[ \re(c-z)m -|a+1|\cdot|z|\right].
\end{align*}
Since $\re(z)<1$ and $\re(c)\geq|a+1|+1\geq1$, it follows that $\re(c-z)>\re(c-1)\geq 0$ so that
	\begin{align*}
	|\Psi(r,s,t;z)|\geq\frac{1}{e}\left[ \re(c-z) -|a+1|\right]>\frac{1}{e}\left[ \re(c)-1-|a+1|\right]\geq0.
	\end{align*}This proves that $|\Psi(r,s,t;z)|> 0$ and therefore $p(z)\prec e^z$ for all $z\in\mathbb{D}$. Hence we have shown that $(c/a)\Phi'(a;c;z)\in\mathscr{P}_e$ if $a\neq 0$ and $\re(c)\geq|a+1|+1$. Using the relation \eqref{elm}, we can write
\[(a-1)\Phi(a;c;z)=(c-1)\Phi'(a-1;c-1;z).\]
Consequently, it follows that $\Phi(a;c;z)\in\mathscr{P}_e$ provided $a\ne 1$ and $\re(c)\geq|a|+2$. If $a=1$, then we will employ the limit procedure. Let $(a_n)$ be a sequence of complex numbers such that $\lim(a_n)=1$, $a_n\neq1$ and $\re(c)\geq|a_n|+2$ for all $n\in \mathbb{N}$. For all $z\in \mathbb{D}$, note that
 \[\Phi(a_n;c;z)=\frac{c-1}{a_n-1}\Phi'(a_n-1;c-1;z)\in \mathscr{P}_e\quad \mbox{for all }n\in \mathbb{N}\]
 so that
 \[|\log (\Phi(a_n;c;z))|<1\quad \mbox{for all }n\in \mathbb{N}.\]
 By the continuity of $\Phi$, it follows that $\Phi(1;c;z)\in\mathscr{P}_e$ if  $\re(c)\geq3$ (on letting $n\to\infty$).
\end{proof}
Let us illustrate Theorem \ref{conf1} through the following example for different choices of $a$ and $c$ satisfying the hypothesis.

\begin{example}
By Theorem \ref{conf1}, it is evident that the polynomials
\begin{gather*}q_1(z):=\Phi(-1;3;z)=1-\frac{z}{3}\\
q_2(z):=\Phi(-2;4;z)=1-\frac{z}{2}+\frac{z^2}{20}\\
q_3(z):=\Phi(-3;5;z)=1-\frac{3z}{5}+\frac{z^2}{10}-\frac{z^3}{210}\end{gather*}
are in the class $\mathscr{P}_e$. The subordinations $q_i(z)\prec e^z$ ($i=1,2,3$)
are graphically represented in Figure \ref{fig:conf}.

\begin{figure}[h]
\begin{center}
  \subfigure[$a=-1,c=3$]{\includegraphics[width=1.95in]{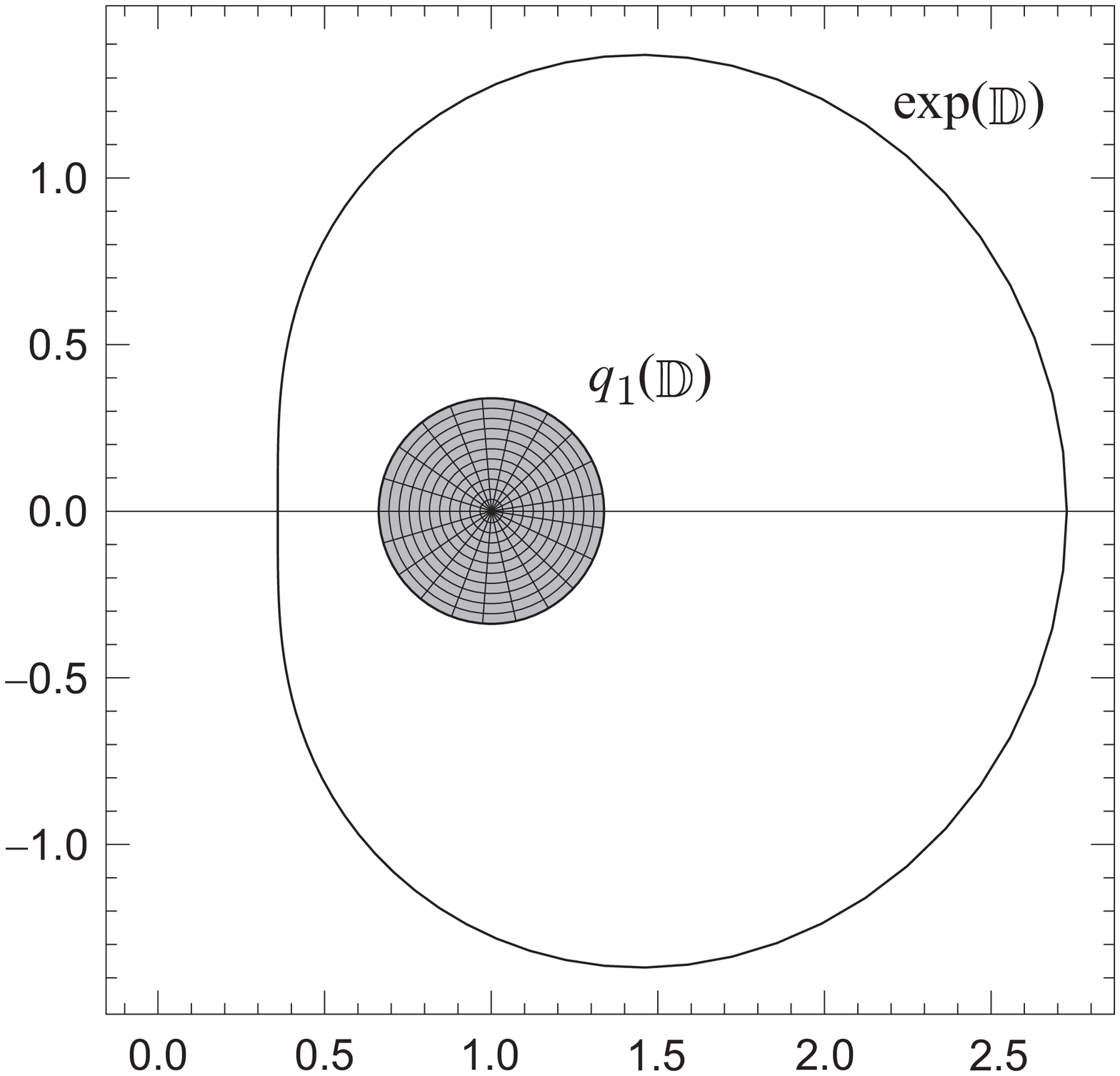}}\hspace{10pt}
  \subfigure[$a=-2,c=4$]{\includegraphics[width=1.95in]{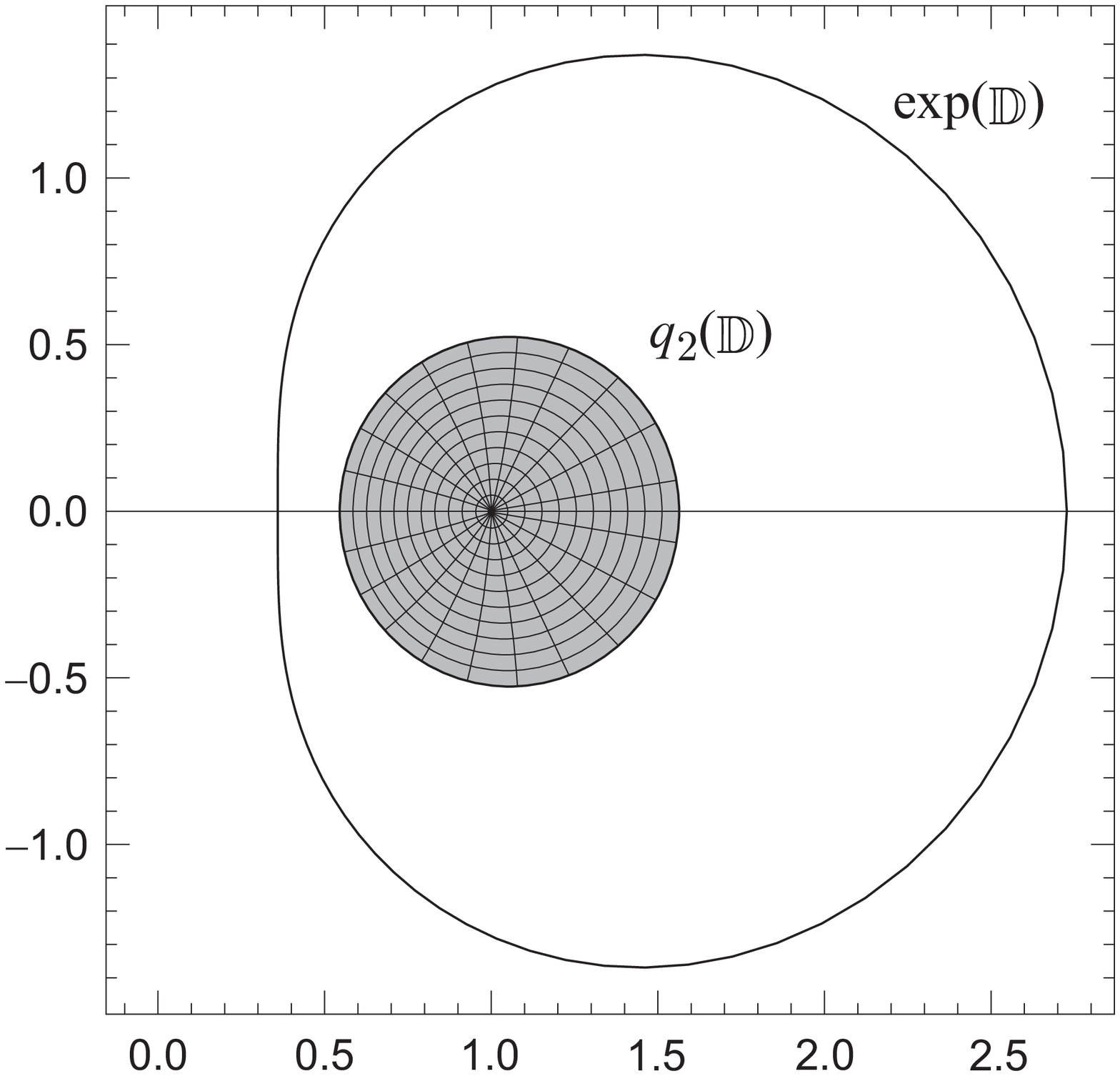}}\hspace{10pt}
  \subfigure[$a=-3,c=5$]{\includegraphics[width=1.95in]{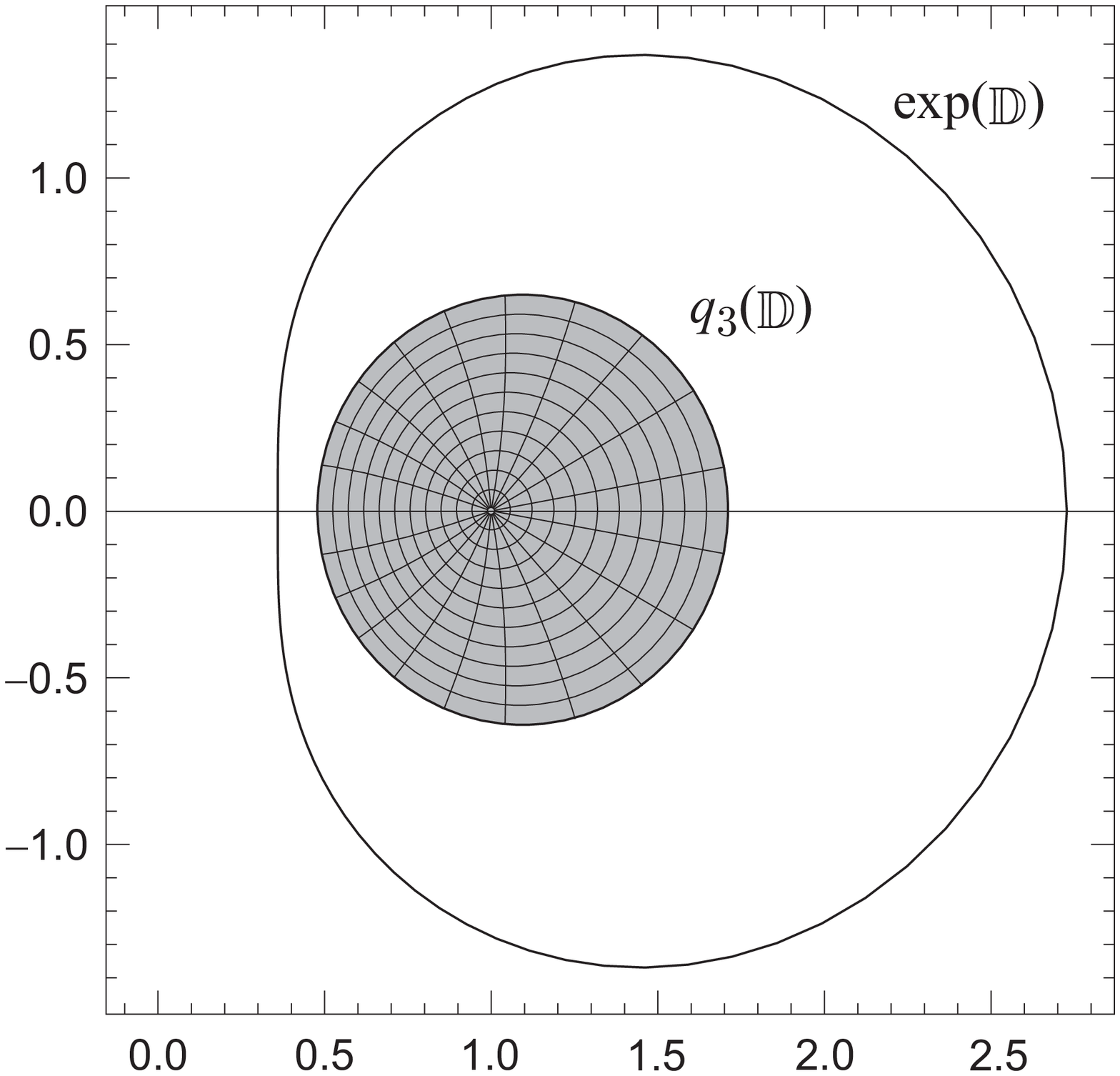}}
  \caption{Graph showing $q_i(\mathbb{D})\subset \exp(\mathbb{D})$.}\label{fig:conf}
\end{center}
\end{figure}
Similarly, the subordinations $\Phi(-25;27;z)\prec e^z$ and $\Phi(-100;102;z)\prec e^z$ are graphically shown in Figure \ref{fig:phi}.
\begin{figure}[h]
	\begin{center}
		\subfigure[$\Phi(-25;27;z)\prec e^z$]{\includegraphics[width=1.95in]{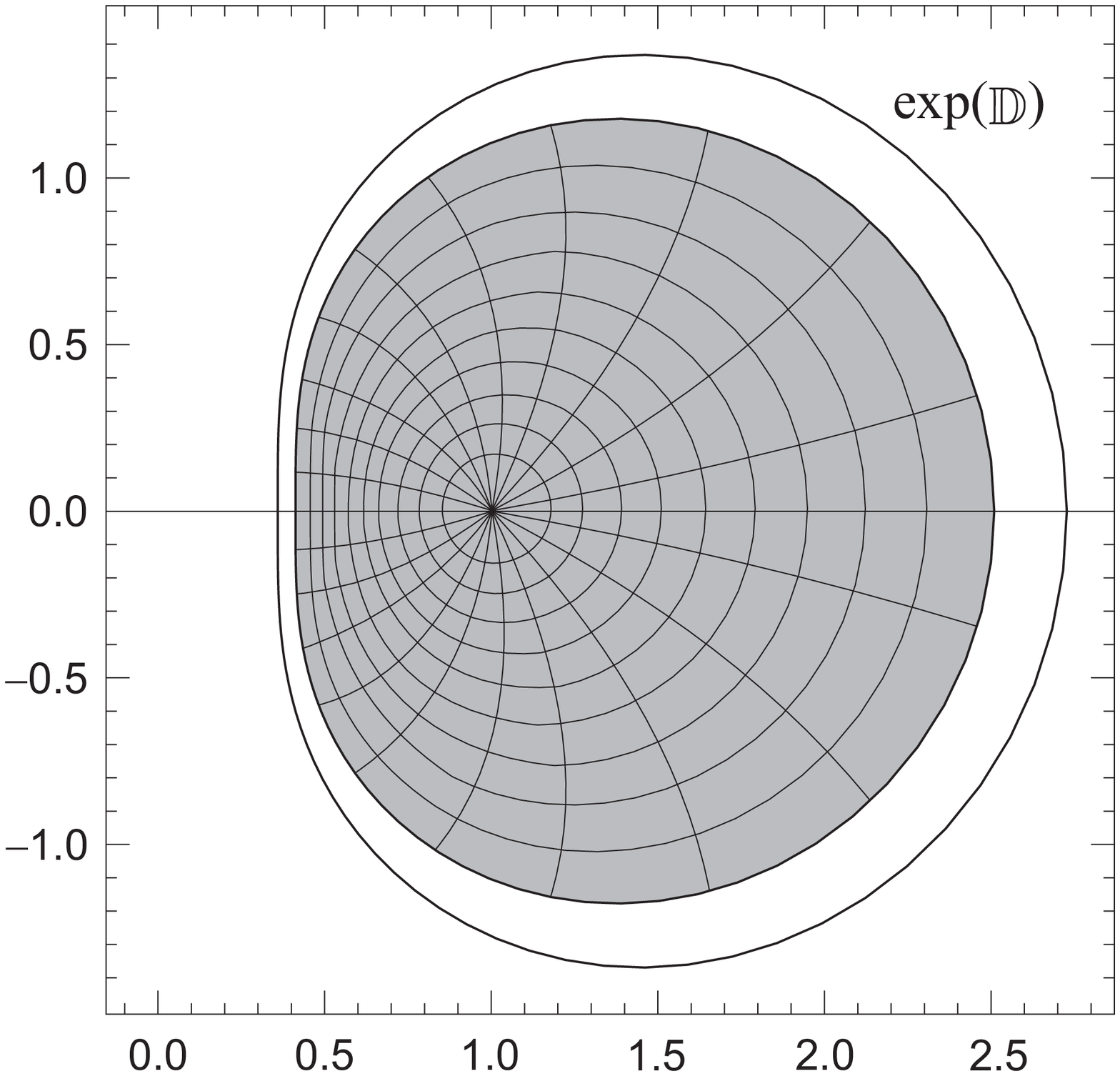}}\hspace{10pt}
		\subfigure[$\Phi(-100;102;z)\prec e^z$]{\includegraphics[width=1.95in]{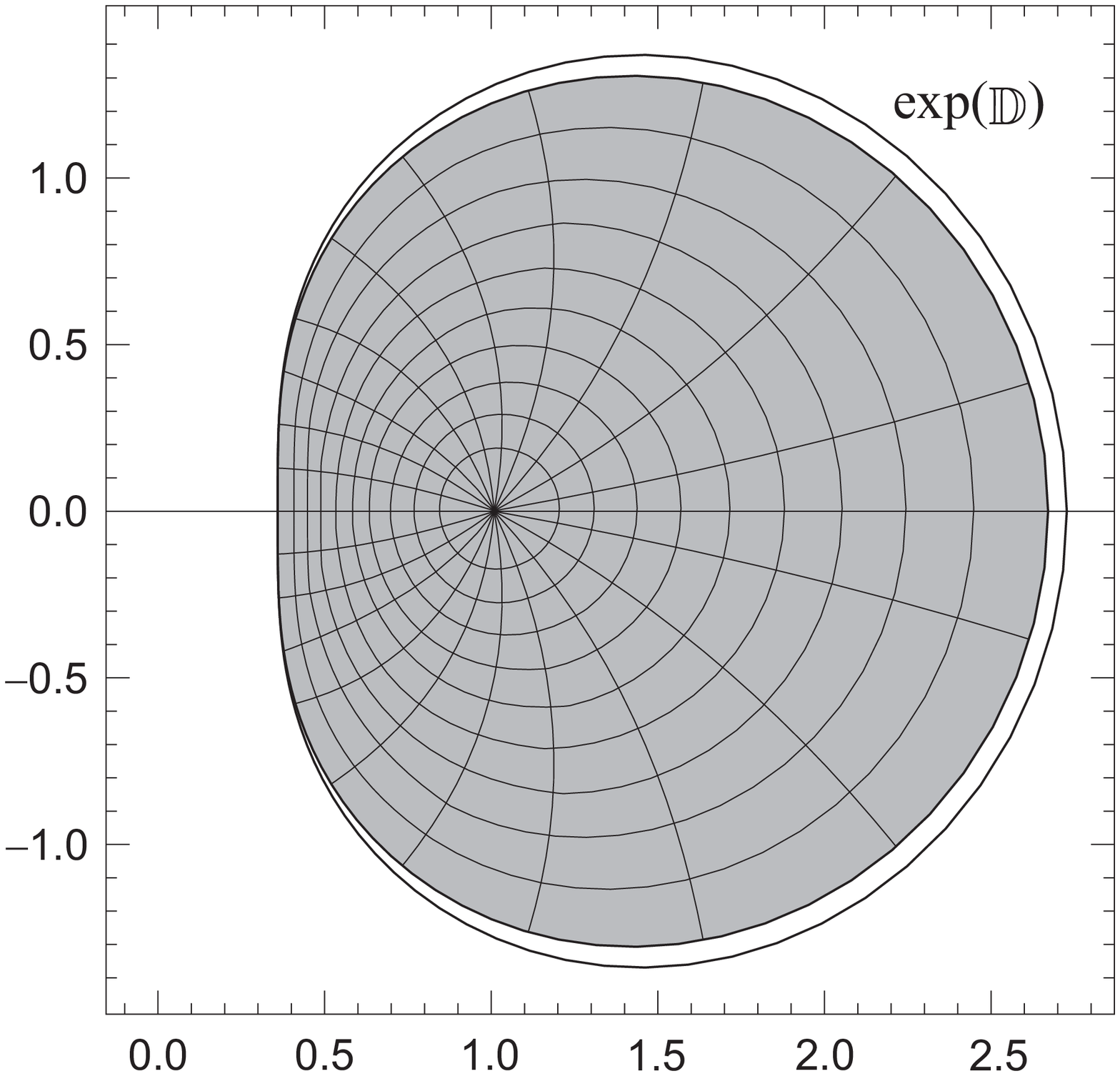}}\hspace{10pt}
		\caption{}\label{fig:phi}
	\end{center}
\end{figure}
\end{example}
The next theorem gives a sufficient condition on the parameters $a$ and $c$ for the confluent hypergeometric function $\Phi(a;c;z)$ to be exponential convex.

\begin{theorem}\label{thm}
Let the parameters $0\ne a$, $c\in\mathbb{R}$  be constrained such that $c$ is not a nonnegative integer and either (i) $a>-1$ and $c\geq a$ or (ii) $a\leq-1$ and $c\geq (1+(1+a)^2)^{1/2}$. If such $a$ and $c$ satisfy the following condition:
	\begin{equation}\label{con2}
(e-1)|c-2|+|a| \leq\frac{(e-1)^2(e+1)}{e^2}
	\end{equation} then the function $(\Phi(a;c;z)-1)c/a\in\ke$.
\end{theorem}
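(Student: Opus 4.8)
The plan is to apply the two-variable form of Lemma \ref{lemA} to the function $p(z) = 1 + zf''(z)/f'(z)$, where $f(z) = (c/a)(\Phi(a;c;z)-1)$. First I would record that $f \in \mathscr{A}$: since $\Phi(a;c;0)=1$ and $\Phi'(a;c;0)=a/c$, one has $f(0)=0$ and $f'(0)=1$, and the relation \eqref{elm} yields the clean identity $f'(z) = \Phi(a+1;c+1;z)=:\phi(z)$. The hypotheses (i) and (ii) enter precisely here: they are designed to guarantee that $\phi$ is nonvanishing on $\mathbb{D}$, so that $p$ is analytic with $p(0)=1$, which is the prerequisite for the subordination machinery.

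Next I would derive the differential equation satisfied by $p$. Because $\phi$ solves Kummer's equation \eqref{con1} with parameters $(a+1,c+1)$, namely $z\phi''+(c+1-z)\phi'-(a+1)\phi=0$, writing $u=p-1=z\phi'/\phi$ and differentiating gives the first-order relation $zp' = (p-1)(z-c)+(a+1)z-(p-1)^2$. Setting $\Psi(r,s;z)=s+(c-z)(r-1)+(r-1)^2-(a+1)z$ and $\Omega=\{0\}$, we then have $\Psi(p(z),zp'(z);z)\in\Omega$ for all $z\in\mathbb{D}$. By Lemma \ref{lemA} it suffices to show that $\Psi(e^{e^{i\theta}},me^{i\theta}e^{e^{i\theta}};z)\neq0$ for every $\theta\in[0,2\pi)$, $m\ge1$ and $z\in\mathbb{D}$.

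The key algebraic observation I would exploit is that, after the regrouping $(c-z)(r-1)+(r-1)^2=(c-2)(r-1)+(r^2-1)$, the admissibility expression is affine in $z$: $\Psi=A-(r+a)z$ with $A=s+(r^2-1)+(c-2)(r-1)$. Thus $\Psi$ can vanish for some $z\in\mathbb{D}$ only if its unique zero $z=A/(r+a)$ lies in $\mathbb{D}$, i.e. only if $|A|<|r+a|$, so the whole problem reduces to the sharp inequality $|s+(r^2-1)+(c-2)(r-1)|\ge|r+a|$ for all $\theta$ and all $m\ge1$. Here I would use the elementary bounds $|r|=e^{\cos\theta}\in[e^{-1},e]$, the estimate $|r-1|=|e^{e^{i\theta}}-1|=|\sum_{n\ge1}e^{in\theta}/n!|\le e-1$, and $|r+a|\le e^{\cos\theta}+|a|$, which reduce the target to $|s+r^2-1|\ge e^{\cos\theta}+(e-1)|c-2|+|a|$; the calibrated hypothesis \eqref{con2} then bounds exactly the combination $(e-1)|c-2|+|a|$ by $(e-1)^2(e+1)/e^2$, so it remains to establish $|s+r^2-1|\ge e^{\cos\theta}+(e-1)^2(e+1)/e^2$.

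The main obstacle is this uniform lower bound for $|s+r^2-1|$ (equivalently $|A|$) across all $\theta$ and all $m\ge1$. For $\cos\theta>0$ the exponential term $r^2=e^{2e^{i\theta}}$ dominates and the bound is comfortable (at $\theta=0$ it supplies $e^2-1$). The delicate regime is $\theta$ near $\pi$, where $|r|$, $|s|$ and $|r^2|$ are simultaneously small, of size $e^{-1}$ and $e^{-2}$, and a crude termwise triangle inequality is too lossy; there I expect to rely on the affine-in-$z$ formulation above, verifying directly that $A/(r+a)$ falls outside $\mathbb{D}$, together with a careful use of the joint constraint \eqref{con2} and of whichever of (i)/(ii) controls $r+a$. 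This is the step where the precise shape of the constant $(e-1)^2(e+1)/e^2$ is forced, and completing it cleanly for the whole admissible parameter range is the crux of the argument.
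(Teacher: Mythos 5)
Your setup is exactly the paper's: the same test function $p(z)=1+z\Lambda''/\Lambda'$, nonvanishing of $\Phi'(a;c;\cdot)$ under (i)/(ii) via Miller--Mocanu, and the same first-order relation --- your $\Psi(r,s;z)=s+(c-z)(r-1)+(r-1)^2-(a+1)z$ expands to precisely the paper's equation \eqref{con4} --- followed by the two-variable form of Lemma \ref{lemA}. The problem is that you stop at the decisive step and declare it ``the crux,'' and that step cannot in fact be completed. Your reduced target $|s+r^2-1|\ge e^{\cos\theta}+(e-1)^2(e+1)/e^2$ is false: at $\theta=\pi$, $m=1$ the left side equals $1+\tfrac1e-\tfrac1{e^2}\approx 1.23$ while the right side is $\tfrac1e+\tfrac{(e-1)^2(e+1)}{e^2}\approx 1.85$. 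Worse, your sharper fallback --- checking that the zero $z=A/(r+a)$ of the affine map lies outside $\mathbb{D}$ --- also fails for parameters permitted by \eqref{con2}: take $a=1$, $c=2$ (the paper's own illustrative example). Then at $\theta=\pi$, $m=1$ one has $A=-\bigl(1+\tfrac1e-\tfrac1{e^2}\bigr)\approx-1.233$ and $r+a=1+\tfrac1e\approx 1.368$, so $|A|<|r+a|$ and the zero $z\approx-0.90$ lies in $\mathbb{D}$; hence $\Psi$ is simply not admissible for these $(a,c)$, and no refinement of the estimates can rescue the plan with the constant in \eqref{con2}. What the $\theta=\pi$ analysis actually supports is a strictly smaller constant, roughly $\tfrac{e-1}{e}|c-2|+|a|\le\tfrac{e^2-1}{e^2}$ (and even that would still require a check over all $\theta$ and $m$).

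You should also know that the paper's own proof founders on exactly this rock, so your diagnosis of where the difficulty sits is accurate. The paper pairs the minimum of $|s+r^2-1|$ (attained at $\theta=\pi$) with the maxima $|r-1|\le e-1$ and $|r|\le e$ (attained at $\theta=0$), and its final display $\tfrac1e-\tfrac1{e^2}+1-(e-1)|c-2|-|a|-e\ge 0$ is equivalent to $(e-1)|c-2|+|a|\le 1+\tfrac1e-\tfrac1{e^2}-e=-\tfrac{(e-1)^2(e+1)}{e^2}<0$, which no admissible $a\ne 0$ can satisfy; the stated bound \eqref{con2} is this impossible condition with its sign flipped. So the comparison verdict is: same approach as the paper, honest about the gap the paper glosses over, but as a proof of Theorem \ref{thm} with the constant \eqref{con2} your proposal is incomplete --- and, like the paper's argument, it cannot be completed along these lines.
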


\begin{proof}
Let $\Lambda(a;c;z)=(\Phi(a;c;z)-1)c/a$ and define a function $p\colon\mathbb{D}\to\mathbb{C}$ by \[p(z)=1+\frac{z\Lambda''(a;c;z)}{\Lambda'(a;c;z)}=1+\frac{z\Phi''(a;c;z)}{\Phi'(a;c;z)}. \]
The conditions given in the hypothesis imply that the function $\re(c/a)\Phi'(a;c;z)>0$ for all $z\in\mathbb{D}$ by \cite[Theorem 1, p.~336]{MR1017006}. Therefore $\Phi'(a;c;z)\ne0$ for all $z\in\mathbb{D}$ and hence the function $p$ is analytic in $\mathbb{D}$. First, suppose that $z\ne0$. Since the function $\Phi(a;c;z)$ satisfies the differential equation \eqref{con1},  we have
\begin{equation*}
z\Phi''(a;c;z)+(c-z)\Phi'(a;c;z)-a\Phi(a;c;z)=0.
\end{equation*}	
Differentiation gives
\begin{equation*}
z\Phi'''(a;c;z)+(c-z+1)\Phi''(a;c;z)-(a+1)\Phi'(a;c;z)=0.
\end{equation*}
Since $\Phi'(a;c;z)\ne0$ for all $z\in\mathbb{D}$, dividing this equation by $\Phi'(a;c;z)$ and then multiplying it by $z$ yield
\begin{equation}\label{con3}
\frac{z^2\Phi'''(a;c;z)}{\Phi'(a;c;z)}+(c-z+1)\frac{z\Phi''(a;c;z)}{\Phi'(a;c;z)}-(a+1)z=0.
\end{equation}
Differentiating the equation $p(z)-1=z\Phi''(a;c;z)/\Phi'(a;c;z)$ logarithmically, we obtain
\begin{equation*}
\frac{z\Phi'''(a;c;z)}{\Phi''(a;c;z)}=\frac{zp'(z)+p^2(z)-3p(z)+2}{p(z)-1}
\end{equation*}
so that
\[\frac{z^2\Phi'''(a;c;z)}{\Phi'(a;c;z)}=zp'(z)+p^2(z)-3p(z)+2.\]
Putting this expression in \eqref{con3}, it follows that the function $p$ satisfies the following second order differential equation
\begin{equation}\label{con4}
zp'(z)+p^2(z)-1+(c-2)(p(z)-1)-(a+p(z))z=0.
\end{equation}
This equation is also valid for $z=0$. Now if we define a function $\Psi\colon\mathbb{C}^2\times\mathbb{D}\to\mathbb{C}$ by
\[\Psi(r,s;z):= s+r^2-1+(c-2)(r-1)-(a+r)z \] and let $\Omega:=\{0\}$, then the  second order differential equation \eqref{con4} can be rewritten as \[\Psi(p(z),zp'(z);z)\in\Omega\qquad (z\in\mathbb{D}). \]
Observe that
\begin{align}
|\Psi(r,s;z)|& =|s+r^2-1+(c-2)(r-1)-az-rz | \notag\\
& \geq |s+r^2-1|-|c-2|\cdot|r-1|-|a|\cdot|z |-|r|\cdot|z|.\label{con5}
\end{align}
For $r = e^{e^{i\theta}}$, $s = m e^{i\theta} e^{e^{i\theta}}$, $\theta\in[0,2\pi)$, $z\in \mathbb{D}$ and $m \geq 1$, we have
\begin{align*}|s+r^2-1|^2& = (\re (me^{i\theta}e^{e^{i\theta}}+e^{2e^{i\theta}}-1))^2+(\imag (me^{i\theta}e^{e^{i\theta}}+e^{2e^{i\theta}}-1))^2\\ &=(me^{\cos\theta}\cos(\theta+\sin\theta)+e^{2\cos\theta}\cos(2\sin\theta)-1)^2\\
&\quad\;+(me^{\cos\theta}\sin(\theta+\sin\theta)+e^{2\cos\theta}\sin(2\sin\theta))^2:=g(\theta).
\end{align*}
The function $g$ attains its minimum at $\theta=\pi$ by applying the second derivative test and
\[\min_{\theta\in[0,2\pi)}g(\theta)=g(\pi)=\left(\frac{-m}{e}+\frac{1}{e^2}-1\right)^2\]
so that
\[|s+r^2-1|\geq \frac{m}{e}-\frac{1}{e^2}+1 \geq  \frac{1}{e}-\frac{1}{e^2}+1.\]
Also,
\begin{align*}
 |r-1|^2&=(e^{\cos\theta}\cos(\sin\theta)-1)^2+e^{2\cos\theta}\sin^2(\sin\theta)\\
 &=e^{2\cos\theta}+1-2e^{\cos\theta}\cos(\sin\theta):=h(\theta).
 \end{align*}
The function $h$ attains its maximum at $\theta=0$ and hence $|r-1|\leq e-1$. Moreover, $|r|=e^{\cos\theta}\leq e$. Using \eqref{con5} and the above estimates, we have
\begin{align*}
|\Psi(r,s;z)|= |\Psi(e^{e^{i\theta}},me^{i\theta}e^{e^{i\theta}};z)| > \frac{1}{e}-\frac{1}{e^2}+1-(e-1)|c-2|-|a|-e\geq 0
\end{align*} whenever \eqref{con2} holds. Therefore $|\Psi(r,s;z)|>0$ and hence using Lemma \ref{lemA}, we conclude that $p(z)\prec e^z$ or $\Lambda\in \ke$.
\end{proof}

Using the famous Alexander duality theorem between the two classes $\ke$ and $\se$ which says that $f\in\ke$ if and only if $zf'\in\se$, the property \eqref{elm} of the function $\Phi(a;c;z)$: \[(a-1)z\Phi(a;c;z)=(c-1)z\Phi'(a-1;c-1;z)\]
and carrying out the case $a=1$ separately using the limit procedure (as done in Theorem \ref{conf1}), the sufficient condition for a function $z\Phi(a;c;z)$ to be exponential starlike is obtained.

\begin{corollary}\label{cor2}
	Let the parameters $a$, $c\in\mathbb{R}$  be constrained such that $c$ is not a nonnegative integer and either (i) $a> 0$ and $c\geq a$ or (ii) $a\leq 0$ and $c\geq 1+(1+a^2)^{1/2}$.
	If such $a$ and $c$ satisfy the following condition:
	\begin{equation*}
	(e-1)|c-3|+|a-1|\leq\frac{(e-1)^2(e+1)}{e^2}
	\end{equation*} then the function $z\Phi(a;c;z)\in\se$.
\end{corollary}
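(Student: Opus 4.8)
The plan is to obtain this corollary as a direct consequence of Theorem \ref{thm}, using only the Alexander duality $f\in\ke\iff zf'\in\se$ quoted before the statement together with a shift of parameters $a\mapsto a-1$, $c\mapsto c-1$. No fresh estimate is required: the admissibility computation has already been carried out in Theorem \ref{thm}, and the task reduces to translating its conclusion and disposing of one boundary value of $a$.

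First I would fix $a\ne1$ and $c$ as in the hypotheses and consider $\Lambda(a-1;c-1;z)=(\Phi(a-1;c-1;z)-1)(c-1)/(a-1)$, which is well defined since $a-1\ne0$. Replacing $(a,c)$ by $(a-1,c-1)$ everywhere in Theorem \ref{thm}, condition (i) ``$a>-1$ and $c\ge a$'' becomes ``$a>0$ and $c\ge a$'', condition (ii) ``$a\le-1$ and $c\ge(1+(1+a)^2)^{1/2}$'' becomes ``$a\le0$ and $c\ge 1+(1+a^2)^{1/2}$'', and the inequality \eqref{con2} becomes $(e-1)|c-3|+|a-1|\le(e-1)^2(e+1)/e^2$; these are precisely the constraints imposed in the corollary. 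Hence Theorem \ref{thm} yields $\Lambda(a-1;c-1;z)\in\ke$. Applying Alexander's theorem gives $z\Lambda'(a-1;c-1;z)\in\se$, and the contiguous relation does the bookkeeping: since $\Lambda'(a-1;c-1;z)=\frac{c-1}{a-1}\Phi'(a-1;c-1;z)$, the identity $(a-1)z\Phi(a;c;z)=(c-1)z\Phi'(a-1;c-1;z)$ collapses the product to $z\Lambda'(a-1;c-1;z)=z\Phi(a;c;z)$. Thus $z\Phi(a;c;z)\in\se$ for every admissible $a\ne1$.

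The one genuinely delicate point, and the step I expect to be the main obstacle, is the excluded value $a=1$, where $a-1=0$ renders $\Lambda(a-1;c-1;z)$ meaningless; I would handle it by the limiting device used at the close of the proof of Theorem \ref{conf1}. Choose admissible parameters $(a_n,c_n)\to(1,c)$ with $a_n\ne1$, which is possible because $(1,c)$ lies in the closure of the admissible parameter set and one may approach it from within that set (perturbing $c_n$ toward the interior of the range allowed by \eqref{con2} when $(1,c)$ happens to sit on its boundary). For each $n$ the case already treated gives $z\Phi(a_n;c_n;z)\in\se$, that is, $z\bigl(z\Phi(a_n;c_n;z)\bigr)'/\bigl(z\Phi(a_n;c_n;z)\bigr)\prec e^z$. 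By continuity of $\Phi$ in its parameters, $\Phi(a_n;c_n;z)\to\Phi(1;c;z)$ locally uniformly on $\mathbb{D}$; since each $\Phi(a_n;c_n;z)$ is zero-free (the univalent function $z\Phi(a_n;c_n;z)$ vanishes only at the origin), Hurwitz's theorem guarantees $\Phi(1;c;z)\ne0$ on $\mathbb{D}$, so the limiting logarithmic derivative is analytic. Finally, subordination to the fixed univalent map $e^z$ is preserved under locally uniform limits (the limit image lies in $\overline{\exp(\mathbb{D})}$ and, being non-constant, in $\exp(\mathbb{D})$ by the open mapping theorem), whence $z\Phi(1;c;z)\in\se$. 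This completes the verification for all admissible $a$.
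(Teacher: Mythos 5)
Your proposal matches the paper's own argument exactly: the paper obtains the corollary from Theorem \ref{thm} via the parameter shift $(a,c)\mapsto(a-1,c-1)$, the Alexander duality $f\in\ke$ if and only if $zf'\in\se$, the contiguous relation $(a-1)z\Phi(a;c;z)=(c-1)z\Phi'(a-1;c-1;z)$, and the limit procedure for the excluded value $a=1$ as in Theorem \ref{conf1}. Your extra care at $a=1$ (Hurwitz's theorem, perturbing $c_n$ when $(1,c)$ lies on the boundary of \eqref{con2}, and preservation of subordination under locally uniform limits) only makes explicit what the paper leaves implicit.
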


Let us illustrate Theorem \ref{thm} and Corollary \ref{cor2} by an example.

\begin{example}
The constants $a=1$ and $c=2$ satisfy the conditions of Theorem \ref{thm}. This gives  \[\Lambda(1;2;z)=2(\Phi(1;2;z)-1)=2\left(\frac{e^z-z-1}{z}\right)\in\ke.\]
Similarly, the constants $a=2$ and $c=3$ satisfy Corollary \ref{cor2} so that the function
\[\Upsilon(2;3;z)=z\Phi(2;3;z)=\frac{2+2(-1+z)e^z}{z}\in \se.\]
This can be seen graphically by considering the quantities
 \[1+\frac{z\Lambda''(1;2;z)}{\Lambda'(1;2;z)}\quad \mbox{and}\quad \frac{z\Upsilon'(2;3;z)}{\Upsilon(2;3;z)}. \]
Both these expressions turn out to  the function
\[q(z)=\frac{e^z(1-z+z^2)-1}{e^z(-1+z)+1}\]
which maps $\mathbb{D}$ inside $\exp(\mathbb{D})$ as depicted in Figure \ref{fig:conf2}.
\begin{figure}[h]
	\centering
	\includegraphics[scale=0.35]{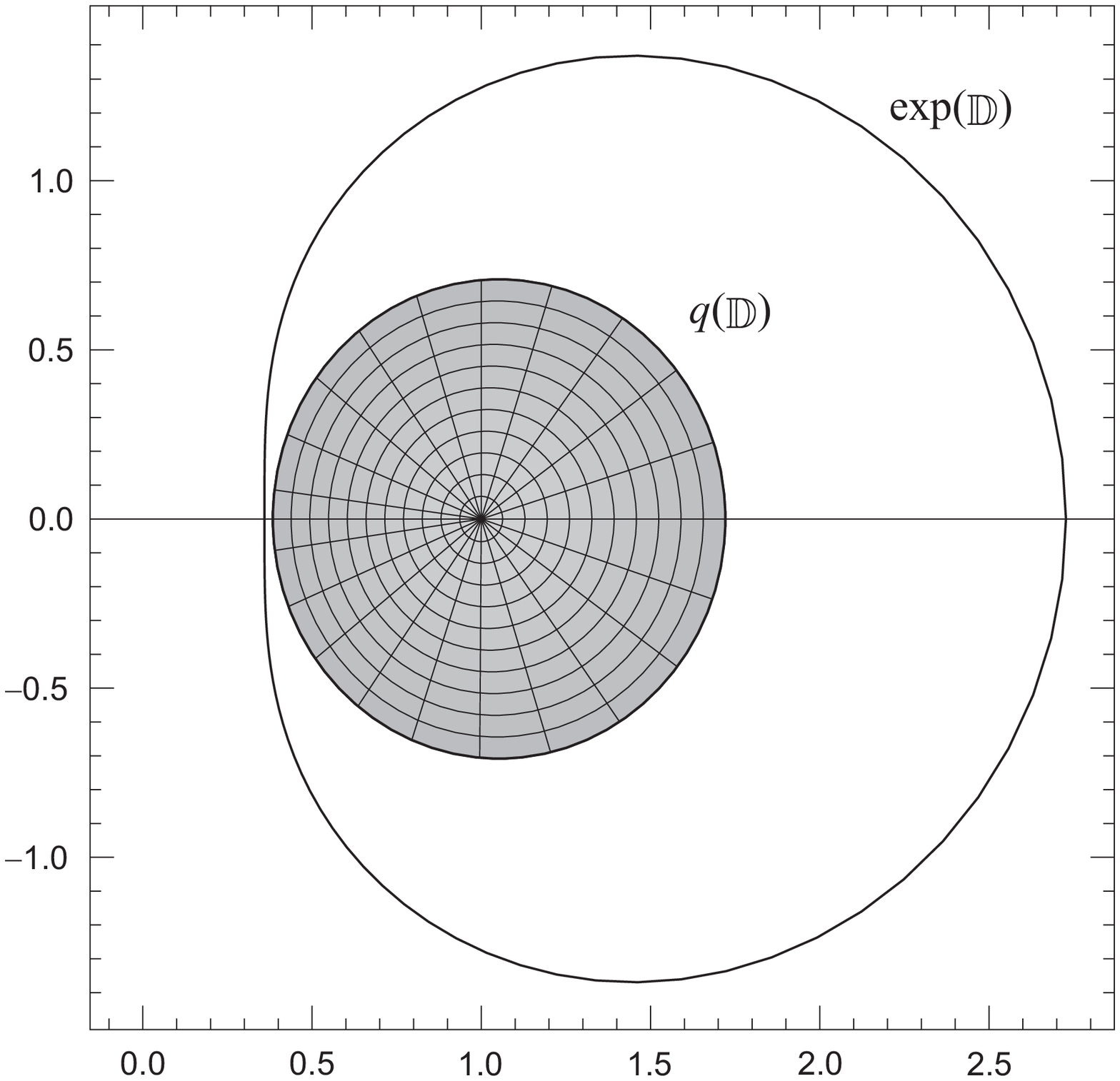}
	\caption{Graph showing $q(\mathbb{D})\subset \exp(\mathbb{D})$}
	\label{fig:conf2}
\end{figure}
\end{example}
For $\re(a)>0$ and $\re(c)>0$, the function $\Phi(a;c;z)$ also has following the integral representation \cite[Equation (1.2-8), p.~5]{MR1760285}:
\begin{equation*}
\Phi(a;c;z)=\frac{\Gamma(c)}{\Gamma(a)\cdot\Gamma(c-a)}\int_0^1t^{a-1}(1-t)^{c-a-1}e^{tz}dt=\int_0^1 e^{tz}d\mu(t)
\end{equation*} where\[d\mu(t)=\frac{\Gamma(c)t^{a-1}(1-t)^{c-a-1}}{\Gamma(a)\cdot\Gamma(c-a)}dt \]
is the probability measure on $[0,1]$.  Therefore using Theorem \ref{thm}, we have
\begin{equation*}
g_\delta(z):=\Phi(1;1+\delta;z)=\delta\int_0^1(1-t)^{\delta-1}e^{tz}dt
\end{equation*}belongs to the class $\ke$ when $|\delta-1|\leq(e^3  - 2 e^2 - e + 1)/(e^2(e-1))$. Similarly using Corollary \ref{cor2}, the function
\begin{equation*}
h_\delta(z):=z\Phi(1;1+\delta;z)=\delta z\int_0^1(1-t)^{\delta-1}e^{tz}dt
\end{equation*} belongs to the class $\se$ when $|\delta-2|\leq(e^2-1)/e^2$.

\section{Lommel Function of the First Kind}\label{lommel}
Prajapat \cite{MR2826152} obtained the sufficient conditions for the generalized and normalized Bessel function to be univalent in $\mathbb{D}$ while Baricz and Ponnusamy \cite{MR2743533} investigated the convexity and starlikeness for the same.  Kanas \textit{et al.\@}  \cite{MR123456} and Radhika \textit{et al.\@}  \cite{MR3753028} obtained the relation between the generalized Bessel function and the Janowski class using distinct techniques.
Very recently, Naz \textit{et al.\@}   \cite {MR0000} determined the conditions on the parameters of the generalized and normalized Bessel function to be exponential convex and exponential starlike. In this section, the connection of Lommel function of first kind with the classes $\ke$ and $\se$ is established.

Consider the second-order inhomogeneous Bessel differential equation
\begin{equation*}
z^2 \omega''(z)+z\omega'(z)+(z^2-\nu^2)\omega(z)=z^{\mu+1} \qquad (\mu,\nu,z\in\mathbb{C}).
\end{equation*}
Its particular solution known as the \emph{Lommel function of the first kind}, is denoted by $\s$ and  can be explicitly expressed in terms of hypergeometric function ${}_1F_2$ as
\begin{equation*}
\s(z)= \frac{z^{\mu+1}}{(\mu-\nu+1)(\mu+\nu+1)} {}_1F_{2} \left(1; \frac{\mu-\nu+3}{2}, \frac{\mu+\nu+3}{2};-\frac{z^2}{4} \right)
\end{equation*}
where $\mu\pm\nu$ is not  a negative odd integer. Since the function $\s$ does not belong to the class $\mathscr{A}$, following normalization of the Lommel function is taken into consideration
\begin{equation*}
\h(z)=(\mu-\nu+1) (\mu+\nu+1) z^{(1-\mu)/2} \s(\sqrt{z}) \qquad (z\in\mathbb{D})
\end{equation*}
which can be expressed in the infinite series
\begin{equation*}
\h (z)= z+\sum_{n\geq 1} \frac{(-1/4)^n}{((\mu-\nu+3)/2)_n ((\mu+\nu+3)/2)_n}z^{n+1} \qquad (z\in\mathbb{D}).
\end{equation*}
Therefore the function $\h\in\mathscr{A}$ and satisfies the  second-order differential equation
\begin{equation}\label{lom1}
z^2 \mathfrak{h}_{\mu,\nu}''(z)+\mu z\mathfrak{h}_{\mu,\nu}'(z)+\frac{1}{4} ( (\mu-1)^2 -\nu^2+z)\mathfrak{h}_{\mu,\nu}(z)= \frac{1}{4}((\mu+1)^2 -\nu^2)z.
\end{equation}
Ya\u{g}mur \cite{MR3353311} and Sim \textit{et al.\@}  \cite{sim2018geometric} investigated the geometric properties like convexity  and starlikeness of order $\alpha$ of normalized form of Lommel functions of the first kind. Sim \textit{et al.\@}  \cite{sim2018geometric} used the method of admissible functions to prove their results. Ya\u{g}mur \cite{MR3353311} also obtained a sufficient condition under which $\h$ becomes close-to-convex of order $(1+\alpha)/2$. Our first result gives the condition on the parameters $\mu$ and $\nu$ so that the Lommel function of the first kind $\h$ belongs to class of exponential convex functions.
\begin{theorem}\label{lom4}
Let the parameters $\mu$, $\nu\in\mathbb{R}$  be constrained such that $\mu\pm\nu$ are not  negative odd integers, $M=(\mu+5)^2-\nu^2$ and $N=(\mu+3)^2-\nu^2$. If such $\mu$ and $\nu$ satisfy the following three conditions:
\[\mu >-5+(3/2+\nu^2)^{1/2}, \quad \frac{4M}{N}<2M-3\]
and
	\begin{equation}\label{lom3}
	\mu(1+2\sin(1))-\frac{1}{4}e(e-1)\left|(\mu+1)(\mu-7)-\nu^2 \right| \geq e^4-3e^3+\frac{13 e^2}{4}-\frac{3e}{4}-\frac{3}{e}+2-2\sin (1)
	\end{equation} then $\h\in\ke$.
\end{theorem}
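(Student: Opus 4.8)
The plan is to reproduce, for $\h$, the three–step scheme of Theorem~\ref{thm}: exhibit the exponential–convexity quotient as an analytic function $p$ with $p(0)=1$, show that $p$ solves a differential equation of the form $\Psi(p(z),zp'(z),z^2p''(z);z)=0$, and then check the admissibility condition of Lemma~\ref{lemA}. First I would set
\[p(z)=1+\frac{z\h''(z)}{\h'(z)},\]
so that the assertion $\h\in\ke$ is exactly $p\in\mathscr{P}_e$. The two structural hypotheses $\mu>-5+(3/2+\nu^2)^{1/2}$ and $4M/N<2M-3$ are in force only to guarantee that $\h'$ does not vanish on $\mathbb{D}$, so that $p$ is analytic with $p(0)=1$; this is precisely the kind of conclusion supplied by the order-$\alpha$ convexity and starlikeness results of Ya\u{g}mur \cite{MR3353311} and Sim \textit{et al.} \cite{sim2018geometric} quoted at the start of the section (any such conclusion already forbids a zero of $\h'$).

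The heart of the argument is to turn \eqref{lom1} into a differential equation for $p$. Logarithmic differentiation of $p-1=z\h''/\h'$ yields, exactly as in the confluent case, the identities
\[\frac{z\h''(z)}{\h'(z)}=p(z)-1,\qquad \frac{z^2\h'''(z)}{\h'(z)}=zp'(z)+p(z)^2-3p(z)+2.\]
The feature absent from \eqref{con1} is that \eqref{lom1} is \emph{inhomogeneous} and the coefficient of $\h$ depends on $z$; as a result, differentiating \eqref{lom1} and dividing by $\h'$ leaves two foreign quantities, $\h/\h'$ and $1/\h'$, neither of which is expressible through $p$ alone. I would eliminate them by treating \eqref{lom1} and its first derivative as two linear relations for $\h/\h'$ and $1/\h'$, solving for these in terms of $p$ and $zp'$, and then imposing the compatibility identity $(1/\h')'=-(p-1)/(z\h')$. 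Since this last step differentiates an expression already containing $p'$, it introduces $p''$ and delivers a genuinely \emph{second-order} equation $\Psi(p,zp',z^2p'';z)=0$, with $\Psi\colon\mathbb{C}^3\times\mathbb{D}\to\mathbb{C}$ and $\Omega:=\{0\}$; the parameter combination $(\mu+1)(\mu-7)-\nu^2=(\mu+1)^2-\nu^2-8(\mu+1)$ appearing in \eqref{lom3} surfaces among the coefficients of this $\Psi$.

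With $\Psi$ assembled I would verify the three-variable admissibility condition of Lemma~\ref{lemA}: for $r=e^{e^{i\theta}}$, $s=me^{i\theta}e^{e^{i\theta}}$ and $t$ with $\re((s+t)e^{-i\theta}e^{-e^{i\theta}})\geq0$, I would bound $|\Psi(r,s,t;z)|$ below by the triangle inequality, retaining one dominant term and estimating the remainder from above through the elementary bounds $|r-1|\leq e-1$, $|r|=e^{\cos\theta}\leq e$ and $|z|<1$ already used in Theorem~\ref{thm}. These bounds are what manufacture the various powers of $e$ in \eqref{lom3}: in particular the factor $\tfrac14 e(e-1)$ multiplying $|(\mu+1)(\mu-7)-\nu^2|$ is plausibly the product of the bounds on $|r|$ and $|r-1|$ carrying the $\tfrac14$ of \eqref{lom1}, while $\sin(1)$ enters through $\imag(e^{e^{i\theta}})$ at the angle minimizing the dominant term. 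The constant on the right of \eqref{lom3} is exactly the worst case, over $\theta\in[0,2\pi)$ and $m\geq1$, of the lower bound so obtained, so that $|\Psi(r,s,t;z)|>0$ holds precisely when \eqref{lom3} is assumed. Then $\Psi(r,s,t;z)\notin\Omega$, and since $\Psi(p,zp',z^2p'';z)=0\in\Omega$, Lemma~\ref{lemA} gives $p\prec e^z$, that is, $\h\in\ke$.

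I expect the obstacle to be twofold. The bulk of the labour is the elimination of $\h/\h'$ and $1/\h'$: unlike \eqref{con1}, equation \eqref{lom1} does not collapse to a homogeneous relation among the derivatives of $\h$ alone (the $z$-dependence of the coefficient of $\h$ leaves a genuine $\h$ term after differentiation), so one must carry the auxiliary quantities and invoke the compatibility identity, leaving a $\Psi$ markedly heavier than that of Theorem~\ref{thm}. The more delicate analytic point is to pin down the minimizing angle $\theta$ in the admissibility estimate and to confirm, by a second-derivative test as was done for $g(\theta)$ in Theorem~\ref{thm}, that the minimum of the resulting $\theta$-dependent bound is exactly the right-hand side of \eqref{lom3}; any looseness there would only strengthen the hypothesis needlessly.
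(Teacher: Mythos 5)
Your overall scheme --- define $p=1+z\h''/\h'$, use the first two hypotheses to rule out zeros of $\h'$, convert \eqref{lom1} into a relation $\Psi(p(z),zp'(z),z^2p''(z);z)=0$ with $\Omega=\{0\}$, and then verify the admissibility condition of Lemma \ref{lemA} --- is indeed the paper's scheme. But your central structural claim, namely that \eqref{lom1} ``does not collapse to a homogeneous relation among the derivatives of $\h$ alone'' and that one must therefore carry the foreign quantities $\h/\h'$ and $1/\h'$ and impose a compatibility identity, is false, and it is exactly the point where the paper's proof is simpler than your plan. The coefficient $\frac14((\mu-1)^2-\nu^2+z)$ of $\h$ and the right-hand side $\frac14((\mu+1)^2-\nu^2)z$ are both \emph{affine} in $z$, so differentiating \eqref{lom1} \emph{twice} annihilates both the $\h$-term and the inhomogeneity, leaving the homogeneous relation
\begin{equation*}
z^2\h^{(4)}(z)+(\mu+4)z\h'''(z)+\tfrac14\bigl((\mu+3)^2-\nu^2+z\bigr)\h''(z)+\tfrac12\h'(z)=0,
\end{equation*}
which divides cleanly by $\h'$. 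Substituting $z\h''/\h'=p-1$ together with the logarithmic-differentiation identities for $z^2\h'''/\h'$ \emph{and} $z^3\h^{(4)}/\h'$ (you recorded only the first two; the fourth-order one is needed and is what produces $z^2p''$) yields the paper's polynomial equation \eqref{lom2}, whose coefficient $(\mu+1)(\mu-7)-\nu^2$ is precisely the quantity in \eqref{lom3}. No elimination and no compatibility condition are required.

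This matters because your substitute route is both unexecuted and hazardous, so as it stands there is a genuine gap. Solving your two linear relations for $X=\h/\h'$ and $Y=1/\h'$ requires inverting a system whose determinant is a constant multiple of $((\mu-1)^2-\nu^2)((\mu+1)^2-\nu^2)$; its nonvanishing is not among the hypotheses and would force a case analysis. More importantly, the $\Psi$ produced by your compatibility identity is not the paper's $\Psi$, so your assertion that the worst case of your lower bound ``is exactly the right-hand side of \eqref{lom3}'' cannot be taken on faith: matching those precise constants \emph{is} the content of the theorem. In the paper's estimate they arise as follows: the terms $t+s+(\mu-2)s+3rs+(\mu+1)(r^2-1)$ are grouped under a factor $e^{e^{i\theta}}$ and bounded below using $\re((t+s)e^{-i\theta}e^{-e^{i\theta}})\geq0$, $m\geq1$, $\re(e^{e^{i\theta}})\geq 1/e$, and the minimum $2\sin(1)$, attained at $\theta=\pi/2,\,3\pi/2$, of $g(\theta)=\re\bigl((e^{e^{i\theta}}-e^{-e^{i\theta}})e^{-i\theta}\bigr)$; thus $\sin(1)$ enters through the term $(\mu+1)(p^2-1)$, not through $\imag(e^{e^{i\theta}})$ of a dominant term. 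The remaining terms are bounded by $(e-1)^3$, $\frac14(e-1)\left|(\mu+1)(\mu-7)-\nu^2\right|$ and $\frac14(e+1)$, and the factor $\frac14 e(e-1)$ in \eqref{lom3} is $\frac14(e-1)$ rescaled by $e$ when the prefactor $1/e$ is cleared, not a product of bounds on $|r|$ and $|r-1|$. Finally, a small inaccuracy: the first two hypotheses are used in the paper not via a convexity or starlikeness theorem but via the quantitative bound $|\h'(z)|\geq 1-4M/(N(2M-3))>0$ from Ya\u{g}mur's paper, which is exactly what those two inequalities encode.
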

\begin{proof}
Let us define a function $p\colon\mathbb{D}\to\mathbb{C}$ by \[p(z)=1+\frac{z \mathfrak{h}_{\mu,\nu}''(z)}{\mathfrak{h}_{\mu,\nu}'(z)}. \]
The first condition implies that $M>3/2$. Also, by means of \cite[p.~1040]{MR3353311}, we have \[|\mathfrak{h}_{\mu,\nu}'(z)| \geq \frac{2 M N-4M-3N}{N(2M-3)}=1- \frac{4M}{N(2M-3)}>0\] by the second condition. Therefore $\mathfrak{h}_{\mu,\nu}'(z)\ne0$ for all $z\in\mathbb{D}$ and hence $p$ is an analytic function with $p(0)=1$. Assume that $z\ne0$.  Differentiating \eqref{lom1}, dividing it by $\mathfrak{h}_{\mu,\nu}'$ and then multiplying the obtained equation by $z$, we have
	\begin{equation*}
	 \frac{z^3\mathfrak{h}_{\mu,\nu}^{(4)}(z)}{\mathfrak{h}_{\mu,\nu}'(z)}+(\mu+4) \frac{z^2\mathfrak{h}_{\mu,\nu}'''(z)}{\mathfrak{h}_{\mu,\nu}'(z)}+\frac{1}{4} ( (\mu+3)^2 -\nu^2+z)\frac{z\mathfrak{h}_{\mu,\nu}''(z)}{\mathfrak{h}_{\mu,\nu}'(z)}+\frac{z}{2}=0.
	\end{equation*}
	 Differentiating the equation $z\mathfrak{h}''_{\mu,\nu}(z)/\mathfrak{h}'_{\mu,\nu}(z)=p(z)-1$ logarithmically and then multiplying $z$ on both sides, we see that the function $p$ satisfies the following second-order differential equation:
	 \begin{gather*}
	z^2p''(z)-5zp'(z)+3zp(z)p'(z)+p^3(z)-6p^2(z)+11 p(z)-6+(\mu+4)(zp'(z)\\ +\;p^2(z)-3p(z)+2)+\frac{1}{4}(p(z)-1)((\mu+3)^2-\nu^2 +z)+\frac{z}{2}=0.
	 \end{gather*}
After rearrangement of terms, we obtain
	  \begin{equation}\begin{split}\label{lom2}
	 z^2p''(z)+zp'(z)+(\mu-2)zp'(z)+3zp(z)p'(z)+(\mu+1)(p^2(z)-1)\\+\;(p(z)-1)^3+ \frac{1}{4}(p(z)-1)((\mu+1)(\mu-7)-\nu^2 )+\frac{1}{4}(p(z)+1)z=0.\end{split}
	 \end{equation}
	 Note that the above equation is also valid for $z=0$.
Now define a function $\Psi\colon\mathbb{C}^3\times\mathbb{D}\to\mathbb{C}$ by  \begin{align*}
	\Psi(r,s,t;z)&:= t+s+(\mu-2)s+3rs+(\mu+1)(r^2-1)+(r-1)^3\\
	&\quad +\frac{1}{4}(r-1)((\mu+1)(\mu-7)-\nu^2 )+\frac{1}{4}(r+1)z\end{align*}
and suppose that $\Omega:=\{0\}$. Then 	\eqref{lom2} can be written as \[\Psi(p(z),zp'(z),z^2p''(z);z)\in\Omega\qquad (z\in\mathbb{D}). \]
 We will use  Lemma \ref{lemA} to prove the required result, that is, we  show that  $\Psi(r,s,t;z)\notin\Omega$ whenever $r = e^{e^{i\theta}}$,   $s = m e^{i\theta} e^{e^{i\theta}}$ and $\re ((s+t)e^{-i\theta}e^{-e^{i\theta}}) \geq 0 $ where $z\in\mathbb{D}$,  $\theta \in [0,2\pi)$ and $m \geq 1$ using the given condition \eqref{lom3}.  Consider
	\begin{align*}
	 |\Psi(r,s,t;z)| & = \left| e^{e^{i\theta}}\left[ (t+s)e^{-e^{i\theta}} +(\mu-2) m e^{i\theta}+3m e^{i\theta}e^{e^{i\theta}} +(\mu+1)\left(e^{e^{i\theta}}-e^{-e^{i\theta}}\right)\right]\right.\\
	 &\quad  +\left. (e^{e^{i\theta}}-1)^3+\frac{1}{4} (e^{e^{i\theta}}-1)((\mu+1)(\mu-7)-\nu^2)+\frac{1}{4} (e^{e^{i \theta}}+1)z\right|\\
	& \geq e^{\cos\theta}\left|(t+s) e^{-i\theta}e^{-e^{i\theta}}+(\mu-2)m+3 me^{e^{i\theta}}+(\mu+1)(e^{e^{i\theta}}-e^{-e^{i\theta}})e^{-i\theta}\right|\\
	& \quad -|e^{e^{i\theta}}-1|^3-\frac{1}{4}|e^{e^{i\theta}}-1|\cdot |(\mu+1)(\mu-7)-\nu^2|-\frac{1}{4}|e^{e^{i\theta}}+1|\cdot|z|\\
&\geq \frac{1}{e} \left[\re ((t+s) e^{-i\theta}e^{-e^{i\theta}})+ (\mu-2)m + 3 m \re (e^{e^{i\theta}}) +(\mu+1)\re\big((e^{e^{i\theta}}-e^{-e^{i\theta}})e^{-i\theta}\big)  \right]\\
	& \quad -|e^{e^{i\theta}}-1|^3-\frac{1}{4}|e^{e^{i\theta}}-1|\cdot |(\mu+1)(\mu-7)-\nu^2|-\frac{1}{4}|e^{e^{i\theta}}+1|\cdot|z|\\
&\geq \frac{1}{e} \left[(\mu-2)m + 3 m \re (e^{e^{i\theta}}) +(\mu+1)\re\big((e^{e^{i\theta}}-e^{-e^{i\theta}})e^{-i\theta}\big)  \right]\\
	& \quad -|e^{e^{i\theta}}-1|^3-\frac{1}{4}|e^{e^{i\theta}}-1|\cdot |(\mu+1)(\mu-7)-\nu^2|-\frac{1}{4}|e^{e^{i\theta}}+1|\cdot|z|
	 \end{align*}
Since $|e^{e^{i\theta}}-1| \leq e-1$, $|e^{e^{i\theta}}+1|\leq e+1$, $\re (e^{e^{i\theta}})=e^{\cos\theta}\cos(\sin\theta)\geq 1/e$ and $\mu>2$ (using \eqref{lom3}), it follows that
\begin{align*}
 |\Psi(r,s,t;z)| & > \frac{1}{e} \left[\mu-2 + \frac{3}{e} +(\mu+1)g(\theta)  \right]-(e-1)^3\\
 &\quad-\frac{1}{4}(e-1)|(\mu+1)(\mu-7)-\nu^2|-\frac{1}{4}(e+1)
\end{align*}
where the function $g$ is defined as
\begin{align*} g(\theta)& := \re \big((e^{e^{i\theta}}-e^{-e^{i\theta}})e^{-i\theta}\big)\\
&= e^{\cos\theta}\cos(\theta-\sin\theta)-e^{-\cos\theta}\cos(\theta+\sin\theta).
\end{align*}
Critical points of $g$ are $0$, $\pi/2$, $\pi$ and $3\pi/2$. Since $g''(0)=g''(\pi)=3/e-e<0$ and $g''(\pi/2)=g''(3\pi/2)=2 \cos(1)>0$, the second derivative test verifies that the minimum value of $g$ occurs at either $\pi/2$ or $3\pi/2$. Hence\[\min_{\theta\in[0,2\pi)} g(\theta) = g\left(\frac{\pi}{2}\right)=g\left(\frac{3\pi}{2}\right)=2\sin(1).\]
Thus we conclude that
\begin{align*}
|\Psi(r,s,t;z)| & > \frac{1}{e} \left[ \mu-2+\frac{3}{e}+2(\mu+1)\sin(1)\right] -(e-1)^3\\&\quad -\;\frac{1}{4}(e-1)|(\mu+1)(\mu-7)-\nu^2|-\frac{1}{4}(e+1)\geq 0
\end{align*} using the given hypothesis \eqref{lom3}.  This proves $|\Psi(r,s,t;z)|\ne0$. Therefore by means of Lemma \ref{lemA}, we have $p(z)\prec e^z$ for all $z\in\mathbb{D}$ which implies $\h\in\ke$.
\end{proof}
Consider the Alexander transform $\mathfrak{f}_{\mu,\nu}\colon\mathbb{D}\to\mathbb{C}$ of the function $\h$ defined by \begin{equation*} \mathfrak{f}_{\mu,\nu}(z):=\int_{0}^{z} \frac{\h(t)}{t}\, dt.\end{equation*} Note that $\mathfrak{f}_{\mu,\nu}\in\mathscr{A}$. The next theorem gives a sufficient condition under which the function $\mathfrak{f}_{\mu,\nu}$ becomes exponential convex and $\h$ becomes exponential starlike.
\begin{theorem}
	Let the parameters $\mu$, $\nu\in\mathbb{R}$  be constrained such that $\mu\pm\nu$ are not  negative odd integers and $	(\mu+1) ((\mu+1)(\mu+3)-\nu^2)\geq1/8$.
	 If such $\mu$ and $\nu$ satisfy the following condition:
	\begin{equation}\label{lom6}
\mu(2e-1)-\frac{1}{4}e(e-1)\left|(\mu-1)^2-\nu^2 \right| \geq e^3-e^2+\frac{13 e}{4} -4
	\end{equation} then the function $\f\in\ke$ and hence $\h\in\se$.
\end{theorem}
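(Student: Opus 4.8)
The plan is to prove the single subordination $p\prec e^z$, where
\[ p(z)=1+\frac{z\f''(z)}{\f'(z)}=\frac{z\h'(z)}{\h(z)}, \]
the two expressions coinciding because $\f'(z)=\h(z)/z$. Establishing $p\in\mathscr{P}_e$ yields $\f\in\ke$, and the Alexander duality $\f\in\ke\Leftrightarrow z\f'=\h\in\se$ then gives $\h\in\se$, so both conclusions follow at once. First I would use the hypothesis $(\mu+1)((\mu+1)(\mu+3)-\nu^2)\geq1/8$ to guarantee that $\f'(z)=\h(z)/z$ does not vanish on $\mathbb{D}$, by a modulus estimate on the power series of $\h(z)/z$ analogous to the lower bound for $|\h'|$ invoked in the preceding theorem (via \cite{MR3353311}); this makes $p$ analytic with $p(0)=1$.

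The key preparatory step is to manufacture a \emph{homogeneous} differential equation for $\h$ out of \eqref{lom1}. Differentiating \eqref{lom1} once produces a relation whose right-hand side is the constant $\tfrac14((\mu+1)^2-\nu^2)$; multiplying that relation by $z$ and subtracting \eqref{lom1} cancels the inhomogeneous term and leaves
\[ z^3\h'''+(1+\mu)z^2\h''+\tfrac14 z\big((\mu-1)^2-\nu^2+z\big)\h'-\tfrac14\big((\mu-1)^2-\nu^2\big)\h=0. \]
This is precisely the place where the combination $(\mu-1)^2-\nu^2$ appearing in \eqref{lom6} is generated. Dividing by $\h$, substituting $z\h'/\h=p$, $z^2\h''/\h=zp'+p^2-p$ and $z^3\h'''/\h=z^2p''+3zpp'-2zp'+p^3-3p^2+2p$, I would rewrite this as $\Psi(p(z),zp'(z),z^2p''(z);z)\in\Omega$ with $\Omega=\{0\}$ and
\[ \Psi(r,s,t;z)=t+(3r+\mu-1)s+r(r-1)(r+\mu-1)+\tfrac14\big((\mu-1)^2-\nu^2\big)(r-1)+\tfrac14 rz. \]

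It then remains to verify the admissibility condition of Lemma \ref{lemA}, namely $|\Psi(r,s,t;z)|>0$ whenever $r=e^{e^{i\theta}}$, $s=me^{i\theta}r$, $\re\big((s+t)e^{-i\theta}e^{-e^{i\theta}}\big)\geq0$, $z\in\mathbb{D}$, $\theta\in[0,2\pi)$ and $m\geq1$. The natural device for taming the cubic is the identity $r+\mu-1=(r-1)+\mu$, which splits $r(r-1)(r+\mu-1)=r(r-1)^2+\mu\,r(r-1)$; the piece $r(r-1)^2$ is genuinely bounded (of modulus at most $e(e-1)^2$ since $|e^{e^{i\theta}}-1|\leq e-1$), while the $\mu$-linear piece $\mu\,r(r-1)$ must be kept in the ``main'' part. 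I would factor $e^{e^{i\theta}}$ out of the terms carrying $t$, $s$ and $\mu\,r(r-1)$, so that the admissibility inequality $\re((s+t)e^{-i\theta}e^{-e^{i\theta}})\geq0$, together with $\re e^{e^{i\theta}}\geq1/e$ and $m\geq1$, bounds this part below, and bound the remaining terms above using $|e^{e^{i\theta}}-1|\leq e-1$ and $|e^{e^{i\theta}}|\leq e$; in particular the term $\tfrac14((\mu-1)^2-\nu^2)(r-1)$ feeds the quantity $\tfrac14 e(e-1)|(\mu-1)^2-\nu^2|$ once the estimate is cleared of its factor $1/e$. Collecting the contributions should reduce the required positivity to exactly inequality \eqref{lom6}, after which Lemma \ref{lemA} gives $p\in\mathscr{P}_e$ and the proof is complete. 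The main obstacle is this final estimate: arranging the grouping so that every unbounded power of $e^{e^{i\theta}}$ is absorbed either into a factor $e^{e^{i\theta}}-1$ or into the admissibility term, and then tracking the numerical constants carefully enough that they assemble into the coefficient $2e-1$ of $\mu$ and the right-hand side $e^3-e^2+\tfrac{13e}{4}-4$; note that, in contrast to the convexity theorem, the $(r-1)$-based grouping avoids the auxiliary function $g(\theta)$, which is why no $\sin(1)$ survives in \eqref{lom6}.
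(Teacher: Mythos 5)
Your reduction to the admissible function is correct, and in fact your $\Psi$ is \emph{identical} to the paper's: expanding $(3r+\mu-1)s$ and splitting $r(r-1)(r+\mu-1)=(\mu-1)r(r-1)+r^2(r-1)$ recovers exactly the paper's expression $t+s+(\mu-2)s+3rs+(\mu-1)r(r-1)+r^2(r-1)+\frac14(r-1)((\mu-1)^2-\nu^2)+\frac14 rz$. The paper reaches it by writing \eqref{lom1} as $q(z)\h(z)=\frac14((\mu+1)^2-\nu^2)z$ with $q=zp'+p^2+(\mu-1)p+\frac14((\mu-1)^2-\nu^2+z)$ and differentiating once; your elimination of the inhomogeneous term at the level of the ODE for $\h$ is the same computation in a different order. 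The analyticity step is also essentially the paper's, which quotes $\re(\h(z)/z)>0$ from \cite[Corollary 2.4]{MR3353311} — the hypothesis $(\mu+1)((\mu+1)(\mu+3)-\nu^2)\geq 1/8$ is tailored precisely to that corollary, so you should invoke it rather than an ad hoc coefficient estimate.

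The genuine gap is in the final estimate, and it is quantitative, not cosmetic. You factor $e^{e^{i\theta}}$ only out of the terms carrying $t$, $s$ and $\mu r(r-1)$, and bound the remaining terms $r(r-1)^2$, $\frac14((\mu-1)^2-\nu^2)(r-1)$, $\frac14 rz$ by their absolute values \emph{outside} that bracket. The factored main part only admits the lower bound $\frac1e\big[\mu(2-\tfrac1e)-2+\tfrac3e\big]$ (its worst case is at $\theta=\pi$, where $|e^{e^{i\theta}}|=1/e$), while the outside terms are bounded by their worst case at $\theta=0$, namely $e(e-1)^2+\frac14(e-1)|(\mu-1)^2-\nu^2|+\frac e4$. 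Decoupling these two worst cases costs a factor of $e$ on every bounded term, so the sufficient condition your chain actually yields is
\[
\mu(2e-1)\;\geq\; e^3(e-1)^2+\tfrac14 e^2(e-1)\left|(\mu-1)^2-\nu^2\right|+\tfrac{e^3}{4}+2e-3,
\]
whose constant is about $66.8$ and whose modulus coefficient is $\frac14e^2(e-1)$ — strictly stronger than \eqref{lom6}, whose constant is about $17.5$ with coefficient $\frac14 e(e-1)$. So the contributions do \emph{not} assemble into \eqref{lom6}. Concretely, $\mu=4$, $\nu=3$ satisfies every hypothesis of the theorem, yet your chain gives $|\Psi|\geq 5.63/e-8.71\approx-6.6$, i.e.\ no conclusion. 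The repair is the paper's device of keeping \emph{all} terms under the common factor $e^{e^{i\theta}}$: estimate $|\Psi|\geq e^{\cos\theta}\big[\,|\mathrm{main}|-(e-1)^2-\frac14(e-1)|(\mu-1)^2-\nu^2|-\frac14\big]$, using $|1-e^{-e^{i\theta}}|\leq e-1$ for the modulus term and $|z|<1$ for the last one; with your split $\mu(r-1)+(r-1)^2$ this yields the requirement $\mu(2e-1)-\frac14 e(e-1)|(\mu-1)^2-\nu^2|\geq e^3-2e^2+\frac{13e}{4}-3$, which is implied by \eqref{lom6} (the paper's split, keeping $(\mu-1)(r-1)$ in the main part and $r(r-1)$ as the bounded piece, reproduces \eqref{lom6} on the nose). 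Finally, note that bounding your main part from below requires $\min_{\theta}\re\big((e^{e^{i\theta}}-1)e^{-i\theta}\big)=1-1/e$ (attained at $\theta=\pi$, via the second-derivative test as in the paper); the admissibility inequality, $m\geq1$ and $\re e^{e^{i\theta}}\geq 1/e$ alone do not control the term $\mu(r-1)e^{-i\theta}$.
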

\begin{proof}Define a function $p\colon \mathbb{D}\to\mathbb{C}$ by
\begin{equation}\label{ee}p(z)=1+\frac{z \mathfrak{f}_{\mu,\nu}''(z)}{\mathfrak{f}_{\mu,\nu}'(z)} = \frac{z\mathfrak{h}'_{\mu,\nu}(z)}{\h(z)}. \end{equation}
Since the parameters $\mu$ and $\nu$ satisfy \eqref{lom6}, we must have $\mu>-1$ (in fact, $\mu>3$) and the condition  $	(\mu+1) ((\mu+1)(\mu+3)-\nu^2)\geq1/8$ implies that $\re (\mathfrak{f}'_{\mu,\nu}(z))=\re(\h(z)/z)>0$ using \cite[Corollary 2.4, p.~1042]{MR3353311}. Therefore the function $p$ is analytic in $\mathbb{D}$ and satisfies $p(0)=1$. As the function $\h$ satisfies \eqref{lom1}, hence by making use of \eqref{ee}, we obtain the equation
\begin{equation*}
\left[zp'(z)+p^2(z)+(\mu-1)p(z)+\frac{1}{4}((\mu-1)^2-\nu^2+z) \right] \h (z)=\frac{1}{4} ((\mu+1)^2-\nu^2)z.
\end{equation*}
Set $q(z):= zp'(z)+p^2(z)+(\mu-1)p(z)+((\mu-1)^2-\nu^2+z)/4 $. Thus the above equation can be rewritten as $q(z)\h(z)= ((\mu+1)^2-\nu^2)z/4$.  Differentiating this equation and then multiplying it by $z$ yields $(zq'(z)+(p(z)-1)q(z))\h(z)=0$. Since $\h(z)\ne0$ for all $z\in\mathbb{D}\setminus\{0\}$ while $zq'(z)+(p(z)-1)q(z)=0$ for $z=0$, we must have $zq'(z)+(p(z)-1)q(z)=0$ for all $z\in\mathbb{D}$. Therefore the function $p$ satisfies the second-order differential equation
\begin{gather*}
z^2p''(z)+2z p(z)p'(z)+(\mu-1)zp'(z)+zp'(z)+(p(z)-1)((\mu-1)p(z)\\+\;p^2(z)+zp'(z))+\frac{1}{4} (p(z)-1) ((\mu-1)^2-\nu^2+z)+\frac{z}{4}=0.
\end{gather*}
Rearrangement of terms leads to the following equation
\begin{equation}\begin{split}\label{lom5}
z^2p''(z)+zp'(z)+(\mu-2)zp'(z)+3 p(z)zp'(z)+(\mu-1)p(z)(p(z)-1)\\+\;p^2(z)(p(z)-1)+\frac{1}{4} (p(z)-1) ((\mu-1)^2-\nu^2)+\frac{1}{4}zp(z)=0.
\end{split}\end{equation}
Let $\Omega:=\{0\}$ and define a function $\Psi\colon\mathbb{C}^3\times\mathbb{D}\to\mathbb{C}$ by \begin{align*}
\Psi(r,s,t;z)& :=  t+s+(\mu-2)s+3rs+(\mu-1)r(r-1)+r^2(r-1)\\
&\quad  + \frac{1}{4}(r-1)((\mu-1)^2-\nu^2)+\frac{1}{4}rz.
\end{align*}
By \eqref{lom5}, we have $\Psi(p(z),zp'(z),z^2p''(z);z)\in\Omega$ for all $z\in\mathbb{D}$. Again, we will apply Lemma~ \ref{lemA} to prove that $p(z)\prec e^z$. For $r = e^{e^{i\theta}}$,   $s = m e^{i\theta} e^{e^{i\theta}}$ and $\re ((s+t)e^{-i\theta}e^{-e^{i\theta}}) \geq 0 $ where $z\in\mathbb{D}$,  $\theta \in [0,2\pi)$ and $m\geq 1$, consider
\begin{align*}
|\Psi(r,s,t;z)|& = |e^{e^{i\theta}}|\cdot\bigg|(t+s)e^{-e^{i\theta}} +(\mu-2) m e^{i\theta} +3 m e^{i\theta}e^{e^{i\theta}} +(\mu-1)(e^{e^{i\theta}}-1)\\
& \quad+\, e^{e^{i\theta}}(e^{e^{i\theta}}-1)+\frac{1}{4}(1-e^{-e^{i\theta}})((\mu-1)^2-\nu^2)+\frac{z}{4}\bigg|\\
& \geq e^{\cos\theta} \bigg[\left| (t+s)e^{-i\theta}e^{-e^{i\theta}} +(\mu-2) m  +3 m e^{e^{i\theta}} +(\mu-1)(e^{e^{i\theta}}-1)e^{-i\theta}\right|\\
& \quad-\, \left.|e^{e^{i\theta}}|\cdot|e^{e^{i\theta}}-1|-\frac{1}{4}|1-e^{-e^{i\theta}}|\cdot|(\mu-1)^2-\nu^2|-\frac{|z|}{4}\right].
\end{align*}
Using the similar analysis as carried out in Theorem \ref{lom4}, it is easy to deduce that
\begin{align*}
|\Psi(r,s,t;z)|& > \frac{1}{e} \bigg[\re((t+s)e^{-i\theta}e^{-e^{i\theta}}) +(\mu-2) m  +3 m \re (e^{e^{i\theta}})+(\mu-1)h(\theta) \\& \quad-\;e (e-1)-\frac{1}{4} (e-1) |(\mu-1)^2-\nu^2|-\frac{1}{4}\bigg]\\
& \geq \frac{1}{e} \bigg[\mu-2  +\frac{3}{e} +(\mu-1)h(\theta) - e (e-1)-\frac{1}{4} (e-1) |(\mu-1)^2-\nu^2|-\frac{1}{4}\bigg]
\end{align*}
where $h(\theta)= \re((e^{e^{i\theta}}-1)e^{-i\theta})=e^{\cos\theta}\cos(\theta-\sin\theta)-\cos\theta$.  Now the values of $h''$ at the critical points are $h''(0)=1-e<0$ and $h''(\pi)=3/e-1>0$. Hence by the second derivative test, we obtain \[\min_{\theta\in[0,2\pi)}h(\theta)=h(\pi)=1-1/e \] and thus
\begin{align*}
|\Psi(r,s,t;z)|& > \frac{1}{e} \bigg[\mu-2  +\frac{{3}}{e}+(\mu-1)\left(1-\frac{1}{e}\right)- e (e-1)-\frac{1}{4} (e-1) |(\mu-1)^2-\nu^2|-\frac{1}{4}\bigg]\\
& = \frac{1}{e} \bigg[\mu\left(2-\frac{1}{e}\right)  +\frac{4}{e}- e (e-1)-\frac{1}{4} (e-1) |(\mu-1)^2-\nu^2|-\frac{13}{4}\bigg]\geq0
\end{align*} using \eqref{lom6}. This gives $\Psi(r,s,t;z)\notin\Omega$ and therefore Lemma \ref{lemA} gives the desired result.
\end{proof}
Now we find the condition on the parameters of $\f$ or $\h$ under which the functions $\mathfrak{f}'_{\mu,\nu}$ or $\h(z)/z$ belongs to the class $\mathscr{P}_e$.
\begin{theorem}\label{thm1}
	Let the parameters $\mu$, $\nu\in\mathbb{C}$  be constrained such that $\mu\pm\nu$ are not  negative odd integers.	If such $\mu$ and $\nu$ satisfy the following condition:
	\begin{equation}\label{lom9}
	4\re (\mu)\geq(e-1)\left|(\mu+1)^2-\nu^2 \right|-3
	\end{equation} then $\mathfrak{f}'_{\mu,\nu}\in\mathscr{P}_e$ or $\h(z)/z\in\mathscr{P}_e$.
\end{theorem}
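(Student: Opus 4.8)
The plan is to apply Lemma \ref{lemA} to the analytic function $p(z):=\mathfrak{f}'_{\mu,\nu}(z)=\mathfrak{h}_{\mu,\nu}(z)/z$. Since $\mathfrak{h}_{\mu,\nu}(z)=z+\sum_{n\ge1}(\cdots)z^{n+1}$, the quotient $\mathfrak{h}_{\mu,\nu}(z)/z$ has a removable singularity at the origin and is analytic in $\mathbb{D}$ with $p(0)=1$; in particular no positivity hypothesis is needed to make $p$ well defined, and the two expressions $\mathfrak{f}'_{\mu,\nu}$ and $\mathfrak{h}_{\mu,\nu}(z)/z$ in the statement are literally equal, so proving $p\in\mathscr{P}_e$ settles both. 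First I would substitute $\mathfrak{h}_{\mu,\nu}=zp$, together with $\mathfrak{h}'_{\mu,\nu}=p+zp'$ and $\mathfrak{h}''_{\mu,\nu}=2p'+zp''$, into the defining differential equation \eqref{lom1} and divide by $z$. The key algebraic point is that the coefficient of $p$, namely $\mu+\tfrac14((\mu-1)^2-\nu^2)$, collapses exactly to $\tfrac14((\mu+1)^2-\nu^2)$, so (writing $A:=\tfrac14((\mu+1)^2-\nu^2)$) the function $p$ satisfies the second-order equation $z^2p''(z)+(\mu+2)zp'(z)+A(p(z)-1)+\tfrac14 zp(z)=0$, an identity that also holds at $z=0$.

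Next I would set $\Omega:=\{0\}$ and define $\Psi(r,s,t;z):=t+(\mu+2)s+A(r-1)+\tfrac14 rz$, so that the displayed equation reads $\Psi(p(z),zp'(z),z^2p''(z);z)\in\Omega$ on $\mathbb{D}$. To invoke Lemma \ref{lemA} it remains to check the admissibility condition $\Psi(r,s,t;z)\notin\Omega$ for $r=e^{e^{i\theta}}$, $s=me^{i\theta}e^{e^{i\theta}}$, $\re((t+s)e^{-i\theta}e^{-e^{i\theta}})\ge0$, $\theta\in[0,2\pi)$, $z\in\mathbb{D}$ and $m\ge1$. Writing $\mu+2=1+(\mu+1)$ to expose the combination $t+s$, I would factor $e^{e^{i\theta}}$ out of $\Psi$ and then group the $t+s$ and $(\mu+1)m$ terms under one modulus, so that the triangle inequality yields
\[
|\Psi(r,s,t;z)|\ge e^{\cos\theta}\left[\bigl|(t+s)e^{-i\theta}e^{-e^{i\theta}}+(\mu+1)m\bigr|-|A|\,\bigl|1-e^{-e^{i\theta}}\bigr|-\tfrac14|z|\right].
\]

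The crux is then to convert this into condition \eqref{lom9}. Since $\mu$ and $\nu$ are complex, I cannot simply take moduli; instead I would bound the first modulus below by its real part, use the admissibility hypothesis $\re((t+s)e^{-i\theta}e^{-e^{i\theta}})\ge0$ to discard the $t+s$ contribution, and retain $m\,\re(\mu+1)$. For the remaining terms I would use $|z|<1$ and the estimate $\bigl|1-e^{-e^{i\theta}}\bigr|\le e-1$, which follows from the bound $|e^{e^{i\phi}}-1|\le e-1$ already established in Theorem \ref{thm} after the substitution $\phi=\theta+\pi$. Observing that \eqref{lom9} forces $\re(\mu)\ge-3/4$, hence $\re(\mu)+1>0$, lets me replace $m\,\re(\mu+1)$ by $\re(\mu)+1$ using $m\ge1$, after which the lower bound becomes $\re(\mu)+\tfrac34-(e-1)|A|=\tfrac14\bigl(4\re(\mu)+3-(e-1)|(\mu+1)^2-\nu^2|\bigr)\ge0$ precisely by \eqref{lom9}. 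Thus $|\Psi(r,s,t;z)|>0$, the admissibility condition holds, and Lemma \ref{lemA} gives $p\prec e^z$, that is, $\mathfrak{f}'_{\mu,\nu}=\mathfrak{h}_{\mu,\nu}(z)/z\in\mathscr{P}_e$. The step I expect to demand the most care, and the only genuine obstacle, is this complex-parameter estimate: keeping the real parts in exactly the right places so that the separate modulus and real-part bounds combine into the single clean inequality \eqref{lom9}.
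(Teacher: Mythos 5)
Your proposal is correct and follows essentially the same route as the paper: the same function $p(z)=\mathfrak{h}_{\mu,\nu}(z)/z$, the same collapsed differential equation $z^2p''+(\mu+2)zp'+\tfrac14((\mu+1)^2-\nu^2)(p-1)+\tfrac14 zp=0$, the same admissible function $\Psi$ with $\Omega=\{0\}$, and the same estimate (grouping $t+s$ with $(\mu+1)m$, passing to real parts, and using $|1-e^{-e^{i\theta}}|\le e-1$, $|z|<1$, $e^{\cos\theta}\ge 1/e$, and $\re(\mu)\ge-3/4$ from \eqref{lom9}). The only cosmetic difference is that you derive the ODE for $p$ by explicit substitution $\mathfrak{h}_{\mu,\nu}=zp$ into \eqref{lom1}, which the paper states without detail.
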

\begin{proof}
Observe that the condition \eqref{lom9} implies that $\re(\mu)\geq-3/4$.	Let us define a function $p\colon\mathbb{D}\to\mathbb{C}$ by \[p(z)=\mathfrak{f}'_{\mu,\nu}(z)=\frac{\h(z)}{z}. \] Clearly the function $p$ is analytic in $\mathbb{D}$ with $p(0)=1$. Since the function $\h$ satisfies \eqref{lom1}, the function $p$ satisfies the second-order differential equation
	\begin{equation*}
	z^2p''(z)+(\mu+2)zp'(z)+\frac{1}{4}p(z)((\mu+1)^2-\nu^2+z) -\frac{1}{4}((\mu+1)^2-\nu^2)=0
	\end{equation*}
	which can be rewritten as
		\begin{equation*}
	z^2p''(z)+zp'(z)+(\mu+1)zp'(z)+\frac{1}{4}(p(z)-1)((\mu+1)^2-\nu^2)+\frac{1}{4}zp(z)=0.
	\end{equation*}
Define a function $\Psi\colon\mathbb{C}^3\times\mathbb{D}\to\mathbb{C}$ by
\begin{equation*}
\Psi(r,s,t;z):=t+s+(\mu+1)s+\frac{1}{4}(r-1)((\mu+1)^2-\nu^2) +\frac{1}{4}rz
\end{equation*}and suppose that  $\Omega:=\{0\}$. Then $\Psi(p(z),zp'(z),z^2p''(z);z)\in\Omega$ for all $z\in\mathbb{D}$. To apply Lemma \ref{lemA}, note that
\begin{align*}
|\Psi(r,s,t;z)|&=|e^{e^{i\theta}}|\cdot\left|(t+s)e^{-e^{i\theta}}+(\mu+1)m e^{i\theta}+\frac{1}{4}(1-{e^{-e^{i\theta}}}) ((\mu+1)^2-\nu^2)+\frac{z}{4} \right|\\
& \geq e^{\cos\theta}\left[\left| (t+s)e^{-i\theta}e^{-e^{i\theta}}+(\mu+1)m \right|-\frac{1}{4}|1-e^{-e^{i\theta}}|\cdot |(\mu+1)^2-\nu^2|-\frac{|z|}{4} \right] \\
& > \frac{1}{e}\left[\re ((t+s)e^{-i\theta}e^{-e^{i\theta}})+(\re(\mu)+1)m -\frac{1}{4}(e-1)|(\mu+1)^2-\nu^2|-\frac{1}{4} \right]\\
&\geq\frac{1}{e}\left[\re(\mu)+1-\frac{1}{4}(e-1)|(\mu+1)^2-\nu^2|-\frac{1}{4} \right]\geq 0
\end{align*} whenever $r = e^{e^{i\theta}}$,   $s = m e^{i\theta} e^{e^{i\theta}}$ and $\re ((s+t)e^{-i\theta}e^{-e^{i\theta}}) \geq 0 $ where $z\in\mathbb{D}$,  $\theta \in [0,2\pi)$ and $m \geq 1$. Therefore $\Psi(r,s,t;z)\not\in\Omega$ and Lemma \ref{lemA} completes the proof.
\end{proof}
As an example, since $\mu=1$ and $\nu=0$ satisfy the hypothesis of Theorem \ref{thm1}, the function \begin{equation*}\label{lom}\mathfrak{f}'_{1,0}(z)= \frac{4-4J_0(\sqrt{z})}{z}\prec e^z\end{equation*} where $J_\nu$ denote the Bessel function of first kind of order $\nu$ having the form \[J_\nu(z)=\sum_{n\geq 0}\frac{(-1)^n}{n! \Gamma(\nu+n+1)}\left(\frac{z}{2}\right)^{2n+\nu}.\]

\section{Generalized Struve Function of the First Kind}\label{struve}
Consider the second order inhomogeneous Bessel differential equation
\begin{equation}\label{st1}
z^2 \omega''(z)+z\omega'(z)+(z^2-\nu^2)\omega(z)=\frac{4(z/2)^{\nu+1}}{\sqrt{\pi}\Gamma(\nu+1/2)} \qquad (\nu,z\in\mathbb{C}).
\end{equation}
The particular solution of \eqref{st1} known as   the \emph{Struve function of the first kind  of order $\nu$}, is denoted by $\textbf{H}_\nu$ and can be expressed explicitly  in terms of the hypergeometric function as follows:
\begin{equation*}
\textbf{H}_\nu(z)= \frac{(z/2)^{\nu+1}}{\sqrt{(\pi/4)\Gamma(\nu+3/2)}} {}_1F_{2} \left(1; \frac{3}{2},\nu+\frac{3}{2};-\frac{z^2}{4} \right)
\end{equation*}
where $\nu+3/2$ is not a negative integer. The function $\textbf{H}_\nu$ also has the infinite series representation
\begin{equation}\label{st9}
 \textbf{H}_{\nu}(z)=\sum_{n\geq 0} \frac{(-1)^n}{ \Gamma (n+3/2)\Gamma(\nu+n+3/2)}\left(\frac{z}{2}\right)^{2n+\nu+1}\qquad (z\in\mathbb{C}).
\end{equation}
The second order differential equation
\begin{equation}\label{st3}
z^2\omega''(z)+z\omega'(z)-(z^2+\nu^2)\omega(z)=\frac{4(z/2)^{\nu+1}}{\sqrt{\pi}\Gamma(\nu+1/2)} \qquad (\nu,z\in\mathbb{C})
\end{equation}
 differs from \eqref{st1} only in coefficient of $\omega$. The particular solution of \eqref{st3} is called as   the \emph{modified Struve function of the first kind of order $\nu$} and is  defined  by
 \begin{equation}\label{st10}
 \textbf{L}_\nu(z)=-ie^{-i\nu\pi/2} \textbf{H}_\nu(iz)=\sum_{n\geq 0} \frac{1}{ \Gamma (n+3/2)\Gamma(\nu+n+3/2)}\left(\frac{z}{2}\right)^{2n+\nu+1}\,  (z\in\mathbb{C}).
 \end{equation}
 Now consider the second order inhomogeneous linear differential equation
 \begin{equation}\label{st2}
 z^2 \omega''(z)+bz\omega'(z)+(cz^2-\nu^2+(1-b)\nu)\omega(z)=\frac{4(z/2)^{\nu+1}}{\sqrt{\pi}\Gamma(\nu+b/2)} \qquad (b,c,\nu,z\in\mathbb{C}).
 \end{equation}
  The case $b=1$ and $c=1$ in \eqref{st2} leads to \eqref{st1} while the case $b=1$ and $c=-1$ gives \eqref{st3}. Therefore we can say that \eqref{st2} generalizes \eqref{st1} and \eqref{st3}. This permits  the study of the geometric properties of Struve and modified Struve functions in a unified manner. The particular solution of \eqref{st2} is called as the \emph{generalized Struve function of the first kind of order $\nu$} and is denoted by $ \mathfrak{w}_{\nu,b,c}$. The function $\mathfrak{w}_{\nu,b,c}$ has the infinite series representation
  \begin{equation}\label{st4}
  \mathfrak{w}_{\nu,b,c}(z)=\sum_{n\geq 0} \frac{(-c)^n}{ \Gamma (n+3/2)\Gamma(\nu+n+(b+2)/2)}\left(\frac{z}{2}\right)^{2n+\nu+1}\qquad (z\in\mathbb{C}).
  \end{equation} Despite the fact that the series \eqref{st4} is convergent in the whole complex plane, the function $\mathfrak{w}_{\nu,b,c}$ is not univalent in $\mathbb{D}$.   If we take into consideration the normalization of the function $\mathfrak{w}_{\nu,b,c}$ defined by the transformation \begin{equation}\label{phi}\mathfrak{u}_{\nu,b,c}(z)= 2^\nu \sqrt{\pi}\Gamma\left(\nu+\frac{b+2}{2}\right)z^{-(\nu+1)/2} \mathfrak{w}_{\nu,b,c}(\sqrt{z})\end{equation}then  the function $\mathfrak{u}_{\nu,b,c}$ has the infinite series representation \begin{equation}\label{e2}
\mathfrak{u}_{\nu,b,c}(z)=  \sum_{n\geq 0} \frac{(-c/4)^n}{(3/2)_n(\kappa)_n}z^n\qquad (z\in\mathbb{C})
\end{equation}
where $\kappa=\nu+(b+2)/2\ne 0,-1,-2,\ldots$.
For brevity, we shall denote $\mathfrak{u}_{\nu,b,c}$  simply by $\mathfrak{u}_{\nu}$.  Note that the function  $\mathfrak{u}_{\nu}$ is entire and satisfies the following second order differential equation: \begin{equation}\label{st5}
4z^2 \mathfrak{u}''_{\nu}(z)+2(2\kappa+1) z \mathfrak{u}'_{\nu}(z)+(cz+2(\kappa-1))\mathfrak{u}_{\nu}(z)-2(\kappa-1)=0.\end{equation}
Ya\u{g}mur and Orhan \cite{MR3035216} gave the sufficient conditions on the parameters of the generalized Struve function to be convex and starlike in $\mathbb{D}$. Using the technique of differential subordination, Orhan and Ya\u{g}mur \cite{MR3718596} obtained the conditions under which $\mathfrak{u}_{\nu}$ is univalent, convex, starlike and close-to-convex. Recently, 	Noreen \textit{et al.\@}  \cite{MR3927311} found the relationship between $\mathfrak{u}_{\nu}$ and the Janowski class.

The condition on the parameters $\kappa$ and $c$ is determined in the first theorem of this section for the generalized  Struve function $\mathfrak{u}_{\nu}$ to belong to the class $\mathscr{P}_e$.
\begin{theorem}\label{thmB}
	If the parameters $\kappa$, $c \in\mathbb{C}$ are constrained such that $\kappa$ is not a nonnegative integer and \begin{equation}\label{st7}
	\re (\kappa)-\frac{1}{2}(e-1) |\kappa-1|\geq \frac{|c|}{4}+\frac{1}{2}
	\end{equation} then $ \mathfrak{u}_{\nu} \in\mathscr{P}_e$.
\end{theorem}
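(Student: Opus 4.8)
The plan is to follow the template of Theorem \ref{thm1} essentially verbatim: exhibit $\mathfrak{u}_{\nu}$ itself as a function $p$ satisfying a second-order differential subordination with admissible set $\Omega=\{0\}$, and invoke Lemma \ref{lemA}. First I would set $p(z):=\mathfrak{u}_{\nu}(z)$. By the power series \eqref{e2} the function $p$ is entire with $p(0)=1$, so it is a legitimate candidate for $\mathscr{P}_e$, and the assertion of the theorem is precisely $p(z)\prec e^{z}$.

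Next I would convert the defining equation \eqref{st5} into a differential equation for $p$ of the shape demanded by the lemma. Dividing \eqref{st5} by $4$, writing the first-order coefficient $(2\kappa+1)/2=\kappa+\tfrac12$ as $1+(\kappa-\tfrac12)$ (so that a bare $zp'$ splits off to pair with $z^{2}p''$), and grouping the free constant with the $p$-term to form $\tfrac{\kappa-1}{2}(p-1)$, rearranges \eqref{st5} into
\[ z^{2}p''(z)+zp'(z)+(\kappa-\tfrac12)zp'(z)+\tfrac{\kappa-1}{2}(p(z)-1)+\tfrac{c}{4}zp(z)=0. \]
This prompts the definition $\Psi(r,s,t;z):=t+s+(\kappa-\tfrac12)s+\tfrac{\kappa-1}{2}(r-1)+\tfrac{c}{4}rz$ with $\Omega:=\{0\}$, so that $\Psi(p(z),zp'(z),z^{2}p''(z);z)\in\Omega$ on $\mathbb{D}$; the grouping of $t$ and $s$ is chosen precisely because the admissibility hypothesis controls $(s+t)e^{-i\theta}e^{-e^{i\theta}}$.

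The substantive step is to check $\Psi(r,s,t;z)\notin\Omega$ under the admissibility data $r=e^{e^{i\theta}}$, $s=me^{i\theta}e^{e^{i\theta}}$, $\re((s+t)e^{-i\theta}e^{-e^{i\theta}})\geq0$, $m\geq1$. After factoring $e^{e^{i\theta}}$ out of $\Psi$, I would apply $|z_{1}+z_{2}|\geq|z_{1}|-|z_{2}|$ to discard the $\tfrac{\kappa-1}{2}(1-e^{-e^{i\theta}})$ and $\tfrac{c}{4}z$ terms and then bound the surviving part below by its real part. Feeding in the now-standard estimates $e^{\cos\theta}\geq1/e$, $|1-e^{-e^{i\theta}}|\leq e-1$, $|z|<1$, together with $\re((s+t)e^{-i\theta}e^{-e^{i\theta}})\geq0$ and $m\geq1$, collapses the bound to
\[ |\Psi(r,s,t;z)|>\frac{1}{e}\left[\re(\kappa)-\tfrac12-\tfrac12(e-1)|\kappa-1|-\tfrac{|c|}{4}\right], \]
which is nonnegative by exactly hypothesis \eqref{st7}; the same hypothesis forces $\re(\kappa)\geq\tfrac12$, which is what licenses replacing $(\re(\kappa)-\tfrac12)m$ by $\re(\kappa)-\tfrac12$ for $m\geq1$. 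Lemma \ref{lemA} then yields $p\prec e^{z}$, i.e.\ $\mathfrak{u}_{\nu}\in\mathscr{P}_e$.

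I anticipate no genuine analytic difficulty, since the argument runs exactly parallel to Theorem \ref{thm1}; the work is bookkeeping. The points needing care are getting the coefficients of $\Psi$ correct after the rearrangement, confirming that $\theta=\pi$ maximizes $|1-e^{-e^{i\theta}}|$ at $e-1$, and checking that the accumulated error terms reassemble into precisely the left-hand side of \eqref{st7} — in particular that the bound $\tfrac{|c|}{4}+\tfrac12$ on the right of \eqref{st7} is exactly what the $\tfrac{c}{4}z$ term and the constant split off in forming $(p-1)$ contribute.
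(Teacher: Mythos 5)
Your proposal is correct and follows essentially the same route as the paper's proof: the paper sets $p=\mathfrak{u}_{\nu}$, rearranges \eqref{st5} into $4z^2p''+4zp'+2(2\kappa-1)zp'+2(\kappa-1)(p-1)+czp=0$ (your equation times $4$, which is immaterial since $\Omega=\{0\}$), defines the corresponding $\Psi$, and verifies admissibility with exactly the bounds you list, including the observation that \eqref{st7} forces $\re(\kappa)\geq 1/2$ so that the factor $m\geq 1$ can be dropped. The final inequality you obtain is the paper's, scaled by $1/4$, so nothing is missing.
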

\begin{proof}
Set $p(z):=\mathfrak{u}_{\nu}(z)$.  Clearly $p$ is an analytic function in $\mathbb{D}$ and $p(0)=1$. Since the function $\mathfrak{u}_{\nu}$ satisfies the second order differential equation \eqref{st5},  the  function $p$ also satisfies the differential equation
 \begin{equation*}
4z^2 p''(z)+2(2\kappa+1) z p'(z)+(cz+2(\kappa-1))p(z)-2(\kappa-1)=0.\end{equation*} Rearrangement of the terms yields
 \begin{equation}\label{st6}
4z^2 p''(z)+4zp'(z)+2(2\kappa-1) z p'(z)+2(\kappa-1)(p(z)-1)+czp(z)=0.\end{equation}
Define a function $\Psi(r,s,t;z):=4t+4s+2(2\kappa-1)s+2(\kappa-1)(r-1)+crz$ and let $\Omega:=\{0\}$. Then \eqref{st6} can be rewritten as \[\Psi(p(z),zp'(z),z^2p''(z);z)\in\Omega\qquad (z\in\mathbb{D}). \]
In order to obtain the desired result, we apply Lemma \ref{lemA}. For this, we show that  $\Psi(r,s,t;z)\notin\Omega$ whenever $r = e^{e^{i\theta}}$,   $s = m e^{i\theta} e^{e^{i\theta}}$ and $\re ((s+t)e^{-i\theta}e^{-e^{i\theta}}) \geq 0 $ where $z\in\mathbb{D}$,  $\theta \in [0,2\pi)$ and $m \geq 1$. Then
\begin{align*}
|\Psi(r,s,t;z)| & = |e^{e^{i\theta}}|\cdot \left|4(t+s)e^{-e^{i\theta}}+2(2\kappa-1) me^{i\theta}+2(\kappa-1)(1-e^{-e^{i\theta}})+ cz\right|\\
&> 2e^{\cos\theta}\left[|2(t+s)e^{-i\theta}e^{-e^{i\theta}}+(2\kappa-1) m|-|\kappa-1|\cdot|1-e^{-e^{i\theta}}|-\frac{|c|}{2}\right]\\
& \geq  \frac{2}{e}\left[2\re((t+s)e^{-i\theta}e^{-e^{i\theta}})+(2\re(\kappa)-1) m-(e-1)|\kappa-1| -\frac{|c|}{2}\right]\\
& \geq \frac{2}{e}\left[2\re(\kappa)-1-(e-1)|\kappa-1|-\frac{|c|}{2} \right]\geq0
\end{align*}
by using \eqref{st7} and its implication $\re(\kappa)\geq1/2$. Therefore by Lemma \ref{lemA}, we conclude that $p(z)\prec e^z$ for all $z\in\mathbb{D}$ which gives  $\mathfrak{u}_{\nu}\in\mathscr{P}_e$.
\end{proof}

As an illustration, if we choose $c=1$, $\nu=1$ and $b=0$, then $\kappa=2$, so that such $c$ and $\kappa$ satisfy the condition \eqref{st7}. Hence using Theorem \ref{thmB}, we have \[ \mathfrak{u}_{1}(z)=\frac{2-2\cos\sqrt{z}}{z}\prec e^z.\]

The generalized Struve function satisfies the following recursive relation \cite[Proposition 2.1(v), p.~6]{MR3718596}:
\begin{equation}
\mathfrak{u}_{\nu}(z)+2z\mathfrak{u}'_{\nu}(z)+\frac{cz}{2\kappa}\mathfrak{u}_{\nu+1}(z)=1.
\end{equation} Using this relation, we obtain the following corollary.
\begin{corollary}
		If the parameters $0\ne c$, $\kappa  \in\mathbb{C}$ are constrained such that  $\kappa$ is not a nonnegative integer and  \begin{equation*}
		\re (\kappa+1)-\frac{1}{2}(e-1) |\kappa| \geq \frac{|c|}{4}+\frac{1}{2}
		\end{equation*} then the function $ (2\kappa/cz)(1-2z\mathfrak{u}'_\nu(z)-\mathfrak{u}_{\nu}(z))\in\mathscr{P}_e$.
\end{corollary}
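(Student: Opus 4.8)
The plan is to recognize the function in question as a generalized Struve function of shifted order and then quote Theorem~\ref{thmB}. Starting from the recurrence relation
\[
\mathfrak{u}_{\nu}(z)+2z\mathfrak{u}'_{\nu}(z)+\frac{cz}{2\kappa}\mathfrak{u}_{\nu+1}(z)=1,
\]
and using $c\neq 0$ to solve for $\mathfrak{u}_{\nu+1}$, I would obtain
\[
\mathfrak{u}_{\nu+1}(z)=\frac{2\kappa}{cz}\bigl(1-2z\mathfrak{u}'_{\nu}(z)-\mathfrak{u}_{\nu}(z)\bigr),
\]
so the function whose membership in $\mathscr{P}_e$ is to be established is exactly $\mathfrak{u}_{\nu+1}$.

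Next I would note that $\mathfrak{u}_{\nu+1}$ is again a normalized generalized Struve function, built from the same parameters $b$ and $c$ but with order $\nu+1$ in place of $\nu$. Its associated parameter is therefore $\kappa_{\nu+1}=(\nu+1)+(b+2)/2=\kappa+1$, and comparing with the series \eqref{e2} shows that passing from $\mathfrak{u}_{\nu}$ to $\mathfrak{u}_{\nu+1}$ amounts precisely to replacing $\kappa$ by $\kappa+1$ throughout.

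Finally, I would apply Theorem~\ref{thmB} to $\mathfrak{u}_{\nu+1}$, that is, with $\kappa$ replaced by $\kappa+1$. The hypothesis \eqref{st7} of that theorem then reads $\re(\kappa+1)-\tfrac{1}{2}(e-1)|(\kappa+1)-1|\geq |c|/4+1/2$, which collapses to $\re(\kappa+1)-\tfrac{1}{2}(e-1)|\kappa|\geq |c|/4+1/2$, exactly the condition assumed in the corollary. Hence $\mathfrak{u}_{\nu+1}\in\mathscr{P}_e$, which is the assertion.

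There is no genuine obstacle here: the entire content is the algebraic identification through the recurrence together with the bookkeeping of the shift $\kappa\mapsto\kappa+1$. The only points demanding a little care are the use of $c\neq 0$ (and of $z\neq 0$, handled by analyticity as in the earlier theorems) when isolating $\mathfrak{u}_{\nu+1}$, and verifying that the non-integrality requirement on the parameter transfers correctly under the shift so that Theorem~\ref{thmB} is legitimately applicable to $\mathfrak{u}_{\nu+1}$.
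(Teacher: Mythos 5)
Your proposal is correct and is precisely the argument the paper intends: the corollary is stated immediately after the recurrence relation $\mathfrak{u}_{\nu}(z)+2z\mathfrak{u}'_{\nu}(z)+\frac{cz}{2\kappa}\mathfrak{u}_{\nu+1}(z)=1$, and the implied proof is exactly your identification of $(2\kappa/cz)(1-2z\mathfrak{u}'_{\nu}(z)-\mathfrak{u}_{\nu}(z))$ with $\mathfrak{u}_{\nu+1}$, whose associated parameter is $\kappa+1$, followed by an application of Theorem \ref{thmB} with $\kappa$ replaced by $\kappa+1$. Your attention to the bookkeeping (the shift $\kappa\mapsto\kappa+1$ in \eqref{st7}, the role of $c\neq 0$, and the transfer of the non-integrality condition, which is harmless since the hypothesis forces $\re(\kappa)>-1/2$) matches what the paper leaves implicit.
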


The next result deals with the sufficient condition on $\kappa$ and $c$ so that the generalized  Struve function belongs to the class $\ke$.

\begin{theorem}\label{thmA}
	Let the parameters $\kappa \in\mathbb{R}$ and $0\ne c \in\mathbb{C}$  be constrained such that $\kappa$ is not a nonnegative integer and \begin{equation}\label{st8}
		\frac{2\kappa}{e}(4\sin(1)+3-e)\geq(e+1)|c|+4(e-1)^3+6(e-1)^2+\frac{6}{e}-\frac{12}{e^2}
		\end{equation} then $6\kappa(1-\mathfrak{u}_{\nu})/c\in\ke$.
\end{theorem}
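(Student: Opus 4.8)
The plan is to follow the pattern of Theorem \ref{lom4} and apply Lemma \ref{lemA} to
\[
p(z)=1+\frac{z\mathfrak{u}''_{\nu}(z)}{\mathfrak{u}'_{\nu}(z)},
\]
which, since the constant factor $6\kappa/c$ cancels in the logarithmic derivative, is exactly $1+zg''/g'$ for $g(z)=6\kappa(1-\mathfrak{u}_{\nu}(z))/c$. First I would record that $g\in\mathscr{A}$: the series \eqref{e2} gives $\mathfrak{u}_{\nu}(0)=1$ and $\mathfrak{u}'_{\nu}(0)=-c/(6\kappa)$, so $g(0)=0$ and $g'(0)=1$. The only genuine preliminary point is that $p$ be analytic, i.e.\ $\mathfrak{u}'_{\nu}(z)\neq0$ on $\mathbb{D}$; I expect this to follow from \eqref{st8}, which forces $\kappa$ to be large and positive, so that a coefficient estimate of the form $|\mathfrak{u}'_{\nu}(z)|\geq |c|/(6\kappa)-\sum_{n\geq2}(\cdots)$ (or a cited convexity result for the generalized Struve function) keeps $\mathfrak{u}'_{\nu}$ away from $0$. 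It then suffices to prove $p\prec e^{z}$.

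Next I would produce the second-order differential equation satisfied by $p$. Because \eqref{st5} couples $\mathfrak{u}_{\nu},\mathfrak{u}'_{\nu},\mathfrak{u}''_{\nu}$, I would differentiate it \emph{twice} so as to eliminate $\mathfrak{u}_{\nu}$ altogether, obtaining
\[
4z^{2}\mathfrak{u}^{(4)}_{\nu}(z)+2(2\kappa+9)z\mathfrak{u}'''_{\nu}(z)+(10\kappa+10+cz)\mathfrak{u}''_{\nu}(z)+2c\,\mathfrak{u}'_{\nu}(z)=0.
\]
Dividing by $\mathfrak{u}'_{\nu}$ and multiplying by $z$, I would substitute the logarithmic-derivative identities $z\mathfrak{u}''_{\nu}/\mathfrak{u}'_{\nu}=p-1$, $z^{2}\mathfrak{u}'''_{\nu}/\mathfrak{u}'_{\nu}=zp'+(p-1)(p-2)$ and $z^{3}\mathfrak{u}^{(4)}_{\nu}/\mathfrak{u}'_{\nu}=z^{2}p''-2zp'+3(p-1)zp'+(p-1)^{3}-3(p-1)^{2}+2(p-1)$, all of which come from setting $w=\mathfrak{u}''_{\nu}/\mathfrak{u}'_{\nu}=(p-1)/z$ and computing $\mathfrak{u}'''_{\nu}/\mathfrak{u}'_{\nu}=w'+w^{2}$, $\mathfrak{u}^{(4)}_{\nu}/\mathfrak{u}'_{\nu}=w''+3ww'+w^{3}$. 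Collecting terms yields $\Psi(p(z),zp'(z),z^{2}p''(z);z)=0$ with $\Omega=\{0\}$ and
\begin{align*}
\Psi(r,s,t;z)&=4t+(4\kappa+10)s+12(r-1)s+4(r-1)^{3}\\
&\quad+(4\kappa+6)(r-1)^{2}+6\kappa(r-1)+cz(r+1).
\end{align*}

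The core of the argument is the admissibility check with $r=e^{e^{i\theta}}$, $s=me^{i\theta}e^{e^{i\theta}}$ and $\re((s+t)e^{-i\theta}e^{-e^{i\theta}})\geq0$. The decisive manoeuvre is to rewrite $(4\kappa+6)(r-1)^{2}=4\kappa(r^{2}-1)+6(r-1)^{2}-8\kappa(r-1)$ and $12(r-1)s=12rs-12s$, so that $-8\kappa(r-1)$ combines with $6\kappa(r-1)$ into $-2\kappa(r-1)$. I would then factor $e^{e^{i\theta}}$ out of the group $4(t+s)$, $(4\kappa-6)s$, $12rs$, $4\kappa(r^{2}-1)$, $-2\kappa(r-1)$ and bound it below by its real part, while estimating the remaining standalone terms $4(r-1)^{3}$, $6(r-1)^{2}$, $cz(r+1)$ in modulus by $4(e-1)^{3}$, $6(e-1)^{2}$, $(e+1)|c|$. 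Using $\re((s+t)e^{-i\theta}e^{-e^{i\theta}})\geq0$, $m\geq1$, $\re(e^{e^{i\theta}})\geq1/e$, the minimum $\min_{\theta}g(\theta)=g(\pi/2)=2\sin(1)$ for $g(\theta)=\re((e^{e^{i\theta}}-e^{-e^{i\theta}})e^{-i\theta})$ (exactly as in Theorem \ref{lom4}), and the maximum $\max_{\theta}k(\theta)=k(\pi)=e-1$ for $k(\theta)=\re((1-e^{-e^{i\theta}})e^{-i\theta})$, the lower bound collapses to
\[
|\Psi(r,s,t;z)|\geq\frac{2\kappa}{e}\bigl(4\sin(1)+3-e\bigr)-(e+1)|c|-4(e-1)^{3}-6(e-1)^{2}-\frac{6}{e}+\frac{12}{e^{2}},
\]
which is nonnegative precisely under \eqref{st8}. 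Hence $\Psi(r,s,t;z)\notin\Omega$, Lemma \ref{lemA} gives $p\prec e^{z}$, and therefore $6\kappa(1-\mathfrak{u}_{\nu})/c\in\ke$.

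The main obstacle I anticipate is this final estimate rather than the lengthy but mechanical double differentiation: one must spot the regrouping that turns $(r-1)^{2}$ into an $(r^{2}-1)$ contribution (this is what generates the $\sin(1)$ through $\min g$) while arranging that the parameter $\kappa$ never lands inside a subtracted modulus, since otherwise it would destroy the clean form of \eqref{st8}. Verifying that the trigonometric extrema occur where claimed (via the second-derivative test, as was done for $g$ and $h$ in the earlier theorems), noting in particular that the $-2\kappa(r-1)$ term requires the \emph{maximum} of $k$ because its coefficient is negative, and keeping track of the constants $\tfrac{6}{e}-\tfrac{12}{e^{2}}$ thrown off by the $-2\kappa(r-1)$ and $12rs$ pieces, is the delicate bookkeeping on which the exact agreement with \eqref{st8} depends.
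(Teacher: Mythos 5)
Your proposal is correct and takes essentially the same route as the paper's proof: the same function $p(z)=1+z\mathfrak{u}''_{\nu}(z)/\mathfrak{u}'_{\nu}(z)$, the same twice-differentiated form of \eqref{st5}, an identical admissible function $\Psi$ (your regrouping of $(4\kappa+6)(r-1)^2$ and $12(r-1)s$ reproduces exactly the paper's $4(t+s)+2(2\kappa-3)s+12rs-2\kappa(r-1)+4\kappa(r^2-1)+\cdots$ form), and the same extremal estimates ($\min g=2\sin(1)$ at $\theta=\pi/2$, $\max l=e-1$ at $\theta=\pi$, $\re(e^{e^{i\theta}})\geq 1/e$) leading to the same final bound matching \eqref{st8}. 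The one point you left provisional---non-vanishing of $\mathfrak{u}'_{\nu}$ on $\mathbb{D}$---is settled in the paper exactly as you anticipated: \eqref{st8} forces $\kappa>|c|/4$, and then $|\mathfrak{u}'_{\nu}(z)|>0$ follows from the cited result \cite[Equation 2.19, p.~10]{MR3718596}.
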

\begin{proof}
Let $\chi_v(z)=6\kappa(1-\mathfrak{u}_{\nu}(z))/c$ and define a function $p\colon\mathbb{D}\to\mathbb{C}$  by \[p(z)=1+\frac{z\chi''_{\nu}(z)}{\chi'_{\nu}(z)}=1+\frac{z\mathfrak{u}''_{\nu}(z)}{\mathfrak{u}'_{\nu}(z)}.  \] Using \eqref{st8}, we have  $\kappa >|c|/4$ so that $|\mathfrak{u}'_{\nu}(z)|>0$ by means of  \cite[Equation 2.19, p.~10]{MR3718596}  and therefore the function $p$
is analytic in $\mathbb{D}$ with $p(0)=1$. The function $\mathfrak{u}_{\nu}$ satisfies the differential equation given by \eqref{st5} and by twice  differentiating  \eqref{st5}, we obtain \[4z^2\mathfrak{u}^{(4)}_{\nu}(z)+2(2\kappa+9)z\mathfrak{u}'''_{\nu}(z)+(10(\kappa+1)+cz)\mathfrak{u}''_{\nu}(z)+2c\mathfrak{u}'_{\nu}(z)=0. \]
Assume that $z\ne0$. Since $\mathfrak{u}'_{\nu}(z)\not=0$ for all $z\in\mathbb{D}$,  dividing the above equation by $\mathfrak{u}'_{\nu}$ and then multiplying it by $z$, we have
  \[ 4 \frac{z^3\mathfrak{u}^{(4)}_{\nu}(z)}{\mathfrak{u}'_{\nu}(z)}+2(2\kappa+9) \frac{z^2\mathfrak{u}'''_{\nu}(z)}{\mathfrak{u}'_{\nu}(z)}+10(\kappa+1)\frac{z\mathfrak{u}''_{\nu}(z)}{\mathfrak{u}'_{\nu}(z)} +cz\frac{z\mathfrak{u}''_{\nu}(z)}{\mathfrak{u}'_{{\nu}}(z)}+2cz=0. \]
 Differentiating the equation $z\mathfrak{u}''_\nu(z)/\mathfrak{u}'_\nu(z)=p(z)-1$ logarithmically implies that the function $p$ satisfies the following differential equation
  \begin{gather*}
4z^2 p''(z)+4zp'(z)+2(2\kappa-3)zp'(z)+12zp(z)p'(z)-2\kappa (p(z)-1)\\+\;4\kappa(p^2(z)-1)+4(p(z)-1)^3+6(p(z)-1)^2+c(p(z)+1)z=0.
\end{gather*}
 Observe that the above equation is also true for $z=0$. Define a function $\Psi(r,s,t;z):=4(t+s)+2(2\kappa-3)s+12rs-2\kappa(r-1)+4\kappa(r^2-1)+4(r-1)^3+6(r-1)^2+c(r+1)z$ and let $\Omega:=\{0\}$. As done in previous results, a straightforward calculation shows that
 \begin{align*}
|\Psi(r,s,t;z)|&=\big| 2e^{e^{i\theta}}\big(2(t+s)e^{-e^{i\theta}}+(2\kappa-3)me^{i\theta}+6me^{i\theta}e^{e^{i\theta}}-\kappa(1-e^{-e^{i\theta}})\\ & \quad +\; 2\kappa(e^{e^{i\theta}}-e^{-e^{i\theta}})\big)+4 (e^{e^{i\theta}}-1)^3+6(e^{e^{i\theta}}-1)^2+c(e^{e^{i\theta}}+1)z\big|\\
& \geq 2e^{\cos\theta} \big|2(t+s)e^{-i\theta}e^{-e^{i\theta}}+(2\kappa-3)m+6me^{e^{i\theta}}-\kappa(1-e^{-e^{i\theta}})e^{-i\theta}\\
 & \quad +\; 2\kappa(e^{e^{i\theta}}-e^{-e^{i\theta}})e^{-i\theta}\big|-4 |e^{e^{i\theta}}-1|^3-6|e^{e^{i\theta}}-1|^2-|c|\cdot|e^{e^{i\theta}}+1|\cdot|z|\\
& > \frac{2}{e}\big[2\re((t+s)e^{-i\theta}e^{-e^{i\theta}})+(2\kappa-3)m+6m \re(e^{e^{i\theta}})-\kappa\re((1-e^{-e^{i\theta}})e^{-i\theta})\\
&\quad+\; 2\kappa\re((e^{e^{i\theta}}-e^{-e^{i\theta}})e^{-i\theta})\big]-4(e-1)^3-6(e-1)^2-(e+1)|c|\\
& \geq \frac{2}{e}\big[2\kappa-3+\frac{6}{e}-\kappa\re((1-e^{-e^{i\theta}})e^{-i\theta})+2\kappa\re((e^{e^{i\theta}}-e^{-e^{i\theta}})e^{-i\theta})\big]\\
&\quad-\; 4(e-1)^3-6(e-1)^2-(e+1)|c|
\end{align*}
whenever $r = e^{e^{i\theta}}$, $s = m e^{i\theta} e^{e^{i\theta}}$ and $\re ((s+t)e^{-i\theta}e^{-e^{i\theta}}) \geq 0 $  for $z\in\mathbb{D}$,  $\theta \in [0,2\pi)$ and $m \geq 1$. Here we have used the fact that $\kappa\geq3/2$ which follows by \eqref{st8}. It remains to find the maximum and minimum values of the functions $\re((1-e^{-e^{i\theta}})e^{-i\theta})$ and $\re((e^{e^{i\theta}}-e^{-e^{i\theta}})e^{-i\theta})$ respectively. The minimum value of the latter has already been determined in Theorem \ref{lom4}. For the maximum value of the former expression, let $l(\theta):=\re((1-e^{-e^{i\theta}})e^{-i\theta})=\cos\theta - e^{-\cos\theta} \cos(\theta+\sin\theta)$. Then $l''(0)=3/e-1>0$ and $l''(\pi)=1-e<0$ at critical points of the function $l$. Hence by the second derivative test, we get \[\max_{\theta\in[0,2\pi)}l(\theta)=l(\pi)=e-1. \]
Thus
\begin{align*}
|\Psi(r,s,t;z)|& > \frac{2}{e}\left[2\kappa-3+\frac{6}{e} -\kappa(e-1)+ 4\kappa\sin(1)\right] -4(e-1)^3-6(e-1)^2-(e+1)|c|\\
& = \frac{2\kappa}{e}(4\sin(1)+3-e)-\frac{6}{e}+\frac{12}{e^2}-4(e-1)^3-6(e-1)^2-(e+1)|c|\geq0
\end{align*}
using the assumption  \eqref{st8}. Therefore $p(z)\prec e^z$ for all $z\in\mathbb{D}$ by Lemma \ref{lemA} and hence the function $\chi_{\nu}$ is exponential convex.
\end{proof}


Choosing $b=1$ and $c=1$, we obtain  the Struve function  given by \eqref{st9} which satisfies the differential equation \eqref{st1}. In particular, using Theorem \ref{thmA}, we have
\begin{corollary}\label{cora}
	Let the parameter $\nu\in\mathbb{R}$ be such that $\nu+3/2$ is not a nonnegative integer and the function $\mathcal{H}_\nu\colon\mathbb{D}\to\mathbb{C}$ be defined by \begin{equation*}\label{be}\mathcal{H}_{\nu}(z)=2^\nu\sqrt{\pi} \Gamma\left(\nu+\frac{3}{2}\right)z^{-(\nu+1)} \emph{\textbf{H}}_\nu(z)\end{equation*}
	where $\emph{\textbf{H}}_\nu$ is the Struve function of the first kind of order $\nu$ defined in \eqref{st9}. If \begin{equation}\label{st12}
	\nu\geq\frac{e}{8\sin(1)+6-2e}\left[4(e-1)^3+6(e-1)^2+(e+1)+\frac{6}{e}-\frac{12}{e^2}\right]-\frac{3}{2}
	\end{equation}
 then the function $-3(2\nu+3)(\mathcal{H}_\nu-1)\in\ke$ and $-3(2\nu+3)z\mathcal{H}'_\nu\in\se$.
\end{corollary}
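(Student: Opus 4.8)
The plan is to obtain Corollary \ref{cora} as a direct specialization of Theorem \ref{thmA}, followed by a single application of the Alexander duality relation $f\in\ke$ if and only if $zf'\in\se$ recorded earlier in the paper. Setting $b=1$ and $c=1$ in the generalized Struve function reduces $\mathfrak{w}_{\nu,b,c}$ to the Struve function $\textbf{H}_\nu$ of \eqref{st9}, and the associated parameter becomes $\kappa=\nu+(b+2)/2=\nu+3/2$. Under this specialization the normalization \eqref{phi} specializes to the function denoted $\mathcal{H}_\nu$ in the statement, whose power series is the series \eqref{e2} with $\kappa=\nu+3/2$ and $c=1$; thus $\mathcal{H}_\nu$ is exactly $\mathfrak{u}_\nu$ in this case. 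Consequently the function $\chi_\nu=6\kappa(1-\mathfrak{u}_\nu)/c$ from Theorem \ref{thmA} becomes $6(\nu+3/2)(1-\mathcal{H}_\nu)=-3(2\nu+3)(\mathcal{H}_\nu-1)$, which is the first function named in the corollary.

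First I would check that the hypothesis \eqref{st12} on $\nu$ is nothing but \eqref{st8} rewritten after the substitution $\kappa=\nu+3/2$, $c=1$. Since then $2\kappa=2\nu+3$ and $|c|=1$, inequality \eqref{st8} reads
\[\frac{2\nu+3}{e}(4\sin(1)+3-e)\geq (e+1)+4(e-1)^3+6(e-1)^2+\frac{6}{e}-\frac{12}{e^2}.\]
Because $4\sin(1)+3-e>0$, one may divide by this factor and isolate $\nu$; using $8\sin(1)+6-2e=2(4\sin(1)+3-e)$ this is precisely \eqref{st12}. The remaining structural hypotheses of Theorem \ref{thmA} hold automatically: $\kappa=\nu+3/2$ is real, $c=1\neq0$, and $\kappa$ fails to be a nonnegative integer exactly when $\nu+3/2$ does, as assumed. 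Theorem \ref{thmA} then yields $-3(2\nu+3)(\mathcal{H}_\nu-1)\in\ke$.

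For the second assertion I would invoke the Alexander duality between $\ke$ and $\se$. Taking $f=-3(2\nu+3)(\mathcal{H}_\nu-1)\in\ke$ and noting that the additive constant disappears under differentiation gives $zf'(z)=-3(2\nu+3)z\mathcal{H}'_\nu(z)$, so that $-3(2\nu+3)z\mathcal{H}'_\nu\in\se$, completing the proof.

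I do not expect a genuine obstacle here, since the statement is a corollary rather than an independent result; the only points demanding care are the bookkeeping of the multiplicative constants in passing from $\chi_\nu=6\kappa(1-\mathfrak{u}_\nu)/c$ to $-3(2\nu+3)(\mathcal{H}_\nu-1)$, and the verification that $4\sin(1)+3-e$ is positive so that solving \eqref{st8} for $\nu$ preserves the direction of the inequality and yields exactly \eqref{st12}.
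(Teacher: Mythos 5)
Your proposal is correct and is essentially the paper's own proof: the paper disposes of this corollary in one sentence, as the specialization $b=c=1$, $\kappa=\nu+3/2$, $|c|=1$ of Theorem \ref{thmA}, with the exponential starlikeness coming from the Alexander duality ($f\in\ke$ if and only if $zf'\in\se$) already invoked before Corollary \ref{cor2}; you merely spell out what the paper leaves implicit, namely the algebra turning \eqref{st8} into \eqref{st12} (including the positivity of $4\sin(1)+3-e$) and the duality step.

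One caveat, which is a defect of the paper's statement rather than of your reasoning: as printed, $\mathcal{H}_\nu(z)=2^\nu\sqrt{\pi}\,\Gamma(\nu+3/2)\,z^{-(\nu+1)}\textbf{H}_\nu(z)$ equals $\mathfrak{u}_\nu(z^2)$ rather than $\mathfrak{u}_\nu(z)$, because the normalization \eqref{phi} involves the power $z^{-(\nu+1)/2}$ and the argument $\sqrt{z}$. Hence your claim that $\mathcal{H}_\nu$ ``is exactly $\mathfrak{u}_\nu$'' is valid only under the reading $\mathcal{H}_\nu(z)=2^\nu\sqrt{\pi}\,\Gamma(\nu+3/2)\,z^{-(\nu+1)/2}\textbf{H}_\nu(\sqrt{z})$; with the literal printed normalization one gets $-3(2\nu+3)(\mathcal{H}_\nu(z)-1)=z^2+\cdots$, which is not even in $\mathscr{A}$, let alone $\ke$. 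The paper's own one-line proof makes the identical silent identification, so on this point your attempt and the paper coincide.
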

Observe that if we choose $b=1$ and $c=-1$ in  Theorem \ref{thmA}, then the function $\mathcal{L}_\nu$ defined by \begin{equation}\label{st11}\mathcal{L}_{\nu}(z)=2^\nu\sqrt{\pi} \Gamma\left(\nu+\frac{3}{2}\right)z^{-(\nu+1)} \textbf{L}_\nu(z)\end{equation}
where $\textbf{L}_\nu$ is the modified Struve function of the first kind of order $\nu$ defined by \eqref{st10}, has the same properties as that of the function $\mathcal{H}_\nu$ because of the fact that $|c|=1$.
More precisely, we have
\begin{corollary}
	Let the parameter $\nu\in\mathbb{R}$ and the function $\mathcal{L}_\nu\colon\mathbb{D}\to\mathbb{C}$ be defined by \eqref{st11}. If the condition \eqref{st12} holds,
	then the function $3(2\nu+3)(\mathcal{L}_\nu-1)\in\ke$ and $3(2\nu+3)z\mathcal{L}'_\nu\in\se$.
\end{corollary}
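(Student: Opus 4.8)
The plan is to obtain this corollary as an immediate specialization of Theorem \ref{thmA}, in exactly the way Corollary \ref{cora} was deduced for the ordinary Struve function, the sole difference being the sign of $c$. First I would set $b=1$ and $c=-1$ in the general framework: by \eqref{st3} this choice produces the modified Struve function $\textbf{L}_\nu$, and after the normalization \eqref{st11} the function $\mathcal{L}_\nu$. For these parameters $\kappa=\nu+(b+2)/2=\nu+3/2$, so that $6\kappa=3(2\nu+3)$, and $|c|=1$.

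The crux of the argument is that the admissibility inequality \eqref{st8} in the hypothesis of Theorem \ref{thmA} involves the parameter $c$ only through its modulus $|c|$. Since $|-1|=1=|1|$, condition \eqref{st8} for $c=-1$ is literally identical to the one occurring in the case $c=1$. Substituting $\kappa=\nu+3/2$ and solving the resulting linear inequality for $\nu$---using that $8\sin(1)+6-2e>0$, so that the direction of the inequality is preserved---shows it is equivalent to \eqref{st12}. Thus, whenever \eqref{st12} holds, all hypotheses of Theorem \ref{thmA} are satisfied with $c=-1$ and $\kappa=\nu+3/2$.

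Applying Theorem \ref{thmA} then gives $6\kappa(1-\mathfrak{u}_\nu)/c\in\ke$. With $c=-1$ the sign flips, turning this expression into $-6\kappa(1-\mathfrak{u}_\nu)=6\kappa(\mathfrak{u}_\nu-1)=3(2\nu+3)(\mathcal{L}_\nu-1)$, which is the first assertion. For the starlikeness claim I would invoke the Alexander duality theorem between $\ke$ and $\se$ (that $f\in\ke$ if and only if $zf'\in\se$), already employed in Section \ref{confluent}: writing $F=3(2\nu+3)(\mathcal{L}_\nu-1)\in\mathscr{A}$, its Alexander transform satisfies $zF'=3(2\nu+3)z\mathcal{L}'_\nu\in\se$, giving the second assertion.

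Since the entire analytic burden---the differential equation for $p$, the admissible-function estimates, and the extremal computations of the trigonometric quantities---was already discharged in Theorem \ref{thmA}, there is no genuine analytic obstacle here. The only points demanding care are bookkeeping ones: confirming that the replacement $c=1\mapsto c=-1$ converts the coefficient $-3(2\nu+3)(\mathcal{H}_\nu-1)$ of Corollary \ref{cora} into $+3(2\nu+3)(\mathcal{L}_\nu-1)$, and verifying that $\mathcal{L}_\nu$ indeed inherits the normalization $F\in\mathscr{A}$ so that the Alexander correspondence may legitimately be applied.
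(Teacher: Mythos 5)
Your proposal is correct and follows essentially the same route as the paper: the paper obtains this corollary precisely by specializing Theorem \ref{thmA} to $b=1$, $c=-1$ (so $\kappa=\nu+3/2$ and $6\kappa=3(2\nu+3)$), observing that hypothesis \eqref{st8} depends on $c$ only through $|c|=1$ and therefore reduces to the same condition \eqref{st12} as in Corollary \ref{cora}, with the division by $c=-1$ flipping the sign to give $3(2\nu+3)(\mathcal{L}_\nu-1)\in\ke$. The Alexander duality step you make explicit to pass to $3(2\nu+3)z\mathcal{L}'_\nu\in\se$ is the same (implicit) step the paper uses in Corollary \ref{cora}.
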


Let   $f$ and $g$ be two analytic functions having the power series expansion as  $f(z)=z+\sum_{n=1}^\infty a_{n+1}z^{n+1}$ and $g(z)=z+\sum_{n=1}^\infty b_{n+1}z^{n+1}$ respectively. The Hadamard product  $f\ast g$ (or convolution) of $f$ and $g$ is  defined as the power series  $(f\ast g)(z)=z+\sum_{n=1}^\infty a_{n+1}b_{n+1}z^{n+1}$. Note that  both the classes $\ke$ and $\se$ are closed under the convolution with convex functions \cite[Theorem 2.9, p.~377]{MR3394060}. In general, the Alexander operator  $A:\mathscr{A}\to\mathscr{A}$  for the function $f$ is defined   by \begin{equation}\label{alx} A[f](z):=\int_{0}^{z} \frac{f(t)}{t} dt=-\log(1-z)\ast f(z) \qquad (z\in\mathbb{D}) \end{equation} and the Libera operator $L:\mathscr{A}\to\mathscr{A}$ is given by  \[ L[f](z):=\frac{2}{z}\int_{0}^{z} f(t) dt=\frac{-2(z+\log(1-z))}{z}\ast f(z) \qquad (z\in\mathbb{D}). \]
The following theorem provides a property of the Alexander and Libera transforms for the generalized Struve function.
\begin{theorem}\label{thmC}
	If the parameters $\kappa$ and $c$ are constrained as in Theorem \ref{thmA}, then $6\kappa(1-\mathfrak{u}_{\nu})/c \ast f \in\ke$ for every $f\in\mathscr{K}$ and therefore the functions $A\left[6\kappa(1-\mathfrak{u}_{\nu})/c\right]$ and $L\left[6\kappa(1-\mathfrak{u}_{\nu})/c\right]$ belong to the class $\ke$.
\end{theorem}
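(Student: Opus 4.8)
The plan is to deduce the entire statement from Theorem \ref{thmA} together with the cited closure of $\ke$ under convolution with convex functions, so that the argument reduces to an elementary convexity check of two explicit kernels. Write $\chi_\nu := 6\kappa(1-\mathfrak{u}_{\nu})/c$. By Theorem \ref{thmA}, the hypothesis on $\kappa$ and $c$ (namely condition \eqref{st8}) already guarantees $\chi_\nu\in\ke$. The first assertion is then immediate: since $\ke$ is closed under convolution with members of $\mathscr{K}$ by \cite[Theorem 2.9, p.~377]{MR3394060}, for every convex $f\in\mathscr{K}$ we obtain $\chi_\nu\ast f\in\ke$.

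The statements about the Alexander and Libera transforms I would reduce to this convolution statement. By the convolution representation \eqref{alx} and the analogous one displayed for the Libera operator, we have $A[\chi_\nu]=\chi_\nu\ast\bigl(-\log(1-z)\bigr)$ and $L[\chi_\nu]=\chi_\nu\ast\bigl(-2(z+\log(1-z))/z\bigr)$. Hence it suffices to verify that each kernel is convex in $\mathbb{D}$; once this is known, both transforms fall under the convolution statement just established, with $f$ taken to be the respective kernel, and therefore lie in $\ke$.

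The only point requiring verification is thus the convexity of the two kernels. For the Alexander kernel $k_A(z)=-\log(1-z)$ this is immediate, since $1+zk_A''(z)/k_A'(z)=1/(1-z)$, whose real part is positive on $\mathbb{D}$ because $1-z$ lies in the right half-plane. For the Libera kernel $k_L(z)=-2(z+\log(1-z))/z$, I would observe that $k_L=L[z/(1-z)]$; since $z/(1-z)$ is convex (indeed $1+zk''(z)/k'(z)=(1+z)/(1-z)$ has positive real part) and the Libera operator is classically known to preserve convexity, it follows that $k_L\in\mathscr{K}$ — alternatively, its convexity can be confirmed by a direct, if slightly longer, computation. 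With both kernels convex, the convolution statement yields $A[\chi_\nu],\,L[\chi_\nu]\in\ke$. I expect no genuine obstacle here: the whole theorem is a clean combination of Theorem \ref{thmA} with the convolution closure property, and the sole computational ingredient is the elementary convexity check of the Alexander and Libera kernels.
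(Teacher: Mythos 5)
Your proposal is correct and follows exactly the route the paper intends: the paper states Theorem \ref{thmC} without a separate proof precisely because it is the combination of Theorem \ref{thmA}, the cited closure of $\ke$ under convolution with convex functions \cite[Theorem 2.9, p.~377]{MR3394060}, and the displayed convolution representations of the Alexander and Libera operators. Your only addition is the explicit verification that the kernels $-\log(1-z)$ and $-2(z+\log(1-z))/z$ are convex, a classical fact the paper takes for granted; your checks of both are correct.
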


\end{document}